\documentclass[12pt]{amsart}

\usepackage{mathrsfs, amssymb,amsthm, amsfonts, amsmath}
\usepackage[all]{xy}
\usepackage{upgreek}
\usepackage{amsmath}

\sloppy \pagestyle{plain}

\textwidth=16cm \textheight=23cm \oddsidemargin=0cm
\evensidemargin=0cm \topmargin=-20pt

\pagenumbering{arabic}

\newtheorem{theorem}[equation]{Theorem}
\newtheorem*{theorem*}{Theorem}
\newtheorem{lemma}[equation]{Lemma}
\newtheorem{corollary}[equation]{Corollary}

\newtheorem{question}[equation]{Question}
\newtheorem{proposition}[equation]{Proposition}

\theoremstyle{definition}
\newtheorem{example}[equation]{Example}

\newtheorem*{definition*}{Definition}

\theoremstyle{remark}
\newtheorem{remark}[equation]{Remark}

\makeatletter\@addtoreset{equation}{section}
\makeatother

\newcommand{\QQ}{\mathbb{Q}}
\newcommand{\ZZ}{\mathbb{Z}}
\newcommand{\PP}{\mathbb{P}}
\newcommand{\KK}{\mathbb{K}}

\newcommand{\LLL}{{\mathscr{L}}}
\newcommand{\HHH}{{\mathscr{H}}}
\newcommand{\SSS}{\mathfrak{S}}

\newcommand{\mumu}{{\boldsymbol{\mu}}}
\newcommand{\op}{\mathrm{op}}

\newcommand{\Aut}{\operatorname{Aut}}
\newcommand{\Bir}{\operatorname{Bir}}
\newcommand{\PGL}{\operatorname{PGL}}
\newcommand{\SL}{\operatorname{SL}}
\newcommand{\Gal}{\operatorname{Gal}}
\newcommand{\Br}{\operatorname{Br}}
\newcommand{\Fix}{\operatorname{Fix}}
\newcommand{\z}{\operatorname{z}}
\newcommand{\rk}{\operatorname{rk}}
\newcommand{\mult}{\operatorname{mult}}
\newcommand{\Pic}{\operatorname{Pic}}
\newcommand{\rkPic}{\rk\Pic}
\newcommand{\WE}{\mathrm{W}(\mathrm{E}_6)}

\def \ge {\geqslant}
\def \le {\leqslant}

\date{}

\title{Birational automorphisms of Severi--Brauer surfaces}

\author{Constantin Shramov}

\address{Steklov Mathematical Institute of Russian Academy of Sciences, 8 Gubkina st.,
Moscow, 119991, Russia
\newline
National Research University Higher School of Economics, Laboratory of Algebraic Geometry, 6 Usacheva str., Moscow, 119048, Russia
}

\email{costya.shramov@gmail.com}

\begin{document}

\begin{abstract}
We prove that a finite group acting by birational automorphisms of a non-trivial Severi--Brauer surface
over a field of characteristic zero contains a normal abelian subgroup of index at most~$3$.
Also, we find an explicit bound for orders of such finite groups in the case when the base field contains all roots of~$1$.
\end{abstract}

\maketitle

\section{Introduction}

A \emph{Severi--Brauer variety} of dimension $n$ over a field $\KK$ is a variety
that becomes isomorphic to the projective space of dimension $n$ over the algebraic closure of~$\KK$.
Such varieties are in one-to-one correspondence with central simple algebras of dimension~$(n+1)^2$
over~$\KK$. They have many nice geometric properties. For instance, it is known that
a Severi--Brauer variety over $\KK$
is isomorphic to the projective space if and only if it has a $\KK$-point.
We refer the reader to \cite{Artin} and~\cite{Kollar-SB} for other basic
facts concerning Severi--Brauer varieties.

Automorphism groups of Severi--Brauer varieties can be described in terms
of the corresponding central simple algebras, see Theorem~E on page~266
of~\cite{Chatelet}, or~\mbox{\cite[\S1.6.1]{Artin}}, or \cite[Lemma~4.1]{ShramovVologodsky}.
As for the group of birational automorphisms, something is known in the case of surfaces.
Namely, let
$\KK$ be a field of characteristic zero (more generally, one can assume that either $\KK$ is perfect, or
its characteristic is different from $2$ and~$3$). Let $S$ be a \emph{non-trivial} Severi--Brauer surface over $\KK$,
i.e. one that is not isomorphic to~$\PP^2$.
In this case generators for the group $\Bir(S)$ of birational automorphisms of $S$ are known,
see \cite{Weinstein}, or~\cite{Weinstein-new}, or~\cite[Theorem~2.6]{Is-UMN},
or Theorem~\ref{theorem:Weinstein} below.
Moreover, relations between the generators are known as well, see~\mbox{\cite[\S3]{IskovskikhTregub}}.
This may be thought of as an analog of the classical theorem of Noether describing the generators
of the group $\Bir(\PP^2)$ over an algebraically closed field,
and the results concerning relations between them  (see \cite{Gizatullin}, \cite{Iskovskikh-SimpleGiz}, \cite{IKT}).

Regarding finite groups acting by automorphisms or birational automorphisms on Severi--Brauer surfaces, the following result is known.

\begin{theorem}[{see \cite[Proposition~1.9(ii),(iii)]{ShramovVologodsky}, \cite[Corollary~1.5]{ShramovVologodsky}}]
\label{theorem:ShramovVologodsky}
Let $S$ be a non-trivial Severi--Brauer surface over a field $\KK$ of characteristic zero.
Suppose that $\KK$ contains all roots of $1$.
The following assertions hold.
\begin{itemize}
\item[(i)] If $G\subset\Aut(S)$ is a finite subgroup,
then every non-trivial element of $G$ has order~$3$, and
$G$ is a $3$-group of order at most~$27$.

\item[(ii)] There exists a constant $B=B(S)$ such that for any finite subgroup $G\subset\Bir(S)$
one has $|G|\le B$.
\end{itemize}
\end{theorem}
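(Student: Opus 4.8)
\emph{Proof proposal.}

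\emph{Part (i).}
Since $S$ is non‑trivial, $\Aut(S)=\PGL_1(A)$ for the associated central simple $\KK$‑algebra $A$, which in dimension $9$ must be a division algebra of degree $3$; thus a finite subgroup $G\subset\Aut(S)$ embeds into $A^\times/\KK^\times$. Given $g\in A^\times$ with non‑trivial image $\bar g$, the ring $\KK[g]\subset A$ is a commutative domain finite over $\KK$, hence a field, and since subfields of $A$ have degree $1$ or $3$ over $\KK$ and $g\notin\KK$, the field $L=\KK[g]$ is cubic. Let $n$ be the order of $\bar g$ in $L^\times/\KK^\times$. For a prime $p\mid n$ the element $h=g^{n/p}$ lies outside $\KK$ but has $h^p=a\in\KK^\times$; as $\KK$ contains all roots of $1$, the polynomial $t^p-a$ over $\KK$ is either irreducible or a product of linear factors, hence irreducible of degree $p$ (otherwise $h\in\KK$), so $p=[\KK(h):\KK]=3$. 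Therefore $n$ is a power of $3$, and (taking $p=3$) $L/\KK$ is cyclic. If $n>3$, replacing $g$ by a suitable power we may assume $n=9$; then $u=g^3N_{L/\KK}(g)^{-1}$ satisfies $u^3=g^9 N_{L/\KK}(g)^{-3}$, and applying $N_{L/\KK}$ to $g^9\in\KK^\times$ gives $N_{L/\KK}(g)^9=(g^9)^3$, so $u^3\in\mu_3$; then $u$ is a root of $1$, so $u\in\KK$, contradicting $g^3\notin\KK$ and $N_{L/\KK}(g)\in\KK^\times$. Hence every non‑trivial element of $G$ has order $3$, so $G$ is a finite group of exponent $3$ and therefore a $3$‑group. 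Finally the $\KK$‑algebra $\KK[G]\subset A$ has dimension $1$, $3$, or $9$: in the first case $G=\{1\}$; in the second $G$ embeds into $L^\times/\KK^\times$ for a cubic field $L$, whose $3$‑torsion has order at most $3$, so $|G|\le 3$; in the third $\KK[G]=A$, so $G$ acts irreducibly on $\PP^2_{\bar\KK}$, and a finite irreducible subgroup of $\PGL_3(\bar\KK)$ of exponent $3$ has order at most $27$ (by the classification of its finite subgroups, or by an elementary imprimitivity argument it lies inside an extraspecial group of order $27$). In all cases $|G|\le 27$.

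\emph{Part (ii).}
Since $\operatorname{char}\KK=0$, a finite $G\subset\Bir(S)$ can be regularised: there is a smooth projective surface $X$ with a biregular $G$‑action and a $G$‑equivariant birational map $X\dashrightarrow S$. Running the $G$‑equivariant minimal model program, we may assume $X$ is a $G$‑Mori fibre space, so either a del Pezzo surface with $\rkPic(X)^G=1$, or a conic bundle $X\to C$ with $\rkPic(X)^G=2$. The key invariant is the index $\delta(X)$, the greatest common divisor of the degrees of closed points of $X$. It is a $\KK$‑birational invariant of smooth projective surfaces: factoring a birational map into blow‑ups and blow‑downs of closed points, a blow‑up $\widetilde X\to X$ at a closed point $x$ has $\delta(\widetilde X)=\delta(X)$, because closed points of $X$ other than $x$ lift to closed points of $\widetilde X$ of the same degree, the exceptional $\PP^1_{\KK(x)}$ has a point of degree $\deg(x)$, and conversely every closed point of $\widetilde X$ pushes to a $0$‑cycle on $X$ of the same degree. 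Since $S$ is non‑trivial it has no $\KK$‑point and $\delta(S)=3$, so $\delta(X)=3$. This excludes the conic bundle case: $X$, and hence $C$, is geometrically rational, so $C$ is a conic with $\delta(C)\in\{1,2\}$; a smooth fibre over a closed point of $C$ of degree $\delta(C)$ is a conic over its residue field, carrying a closed point of degree at most $2$ over that field, hence a closed point of $X$ of degree in $\{1,2,4\}$ --- impossible since $\delta(X)=3$.

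So $X$ is a del Pezzo surface with $\rkPic(X)^G=1$ and $\delta(X)=3$; set $d=K_X^2$. If $d\le 5$ then $\Aut(X_{\bar\KK})$ is finite, so $|G|\le|\Aut(X_{\bar\KK})|$ is bounded by a constant depending only on $d$. If $d=7$ then $X$ contains a Galois‑stable $(-1)$‑curve, which is isomorphic to $\PP^1_\KK$, so $\delta(X)=1$ --- excluded. If $d=8$ then (as $\rkPic(X)^G=1$ forces the two rulings of $X_{\bar\KK}$ to be interchanged) $X$ is a smooth quadric surface, so $\delta(X)\mid 2$ --- excluded. If $d=6$ then $\Aut^{\circ}(X)$ is a $2$‑dimensional torus whose Galois action factors through $W(\mathrm{A}_2)$; if the Galois image in $W(\mathrm{A}_2)$ contained no element of order $3$, then $X$ would be Galois‑equivariantly birational to $\PP^2$, to a smooth quadric surface, or to a product of two conics, all of index prime to $3$, contradicting $\delta(X)=3$; hence the Galois image in $W(\mathrm{A}_2)$ contains an element of order $3$, so $\Aut^{\circ}(X)$ is anisotropic, and therefore (using $\KK\supseteq\mu_\infty$) $\Aut^{\circ}(X)(\KK)$ has finite torsion of bounded order, while $\Aut(X)(\KK)/\Aut^{\circ}(X)(\KK)$ has order at most $|W(\mathrm{A}_2\times\mathrm{A}_1)|=12$, so $|G|$ is bounded. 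Finally if $d=9$ then $X_{\bar\KK}\cong\PP^2$ and $\delta(X)=3$, so $X$ is a non‑trivial Severi--Brauer surface, and part (i) gives $|G|\le 27$. In every case $|G|$ is bounded by an absolute constant, which we may take for $B$.

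The steps needing genuine care are the birational invariance of $\delta$ and, above all, the degree‑$6$ case, where one must tie the index to the Galois action on the automorphism torus; the remaining ingredients --- regularisation, the equivariant minimal model program, finiteness of automorphism groups of del Pezzo surfaces of degree at most $5$ --- are standard, and the degree‑$9$ case reduces to part (i). I expect the degree‑$6$ analysis to be the main obstacle; alternatively, one may bypass the whole case analysis by invoking the known list of generators of $\Bir(S)$ (Weinstein), which confines finite subgroups to conjugates of $\Aut(S)$ and reduces part (ii) directly to part (i).
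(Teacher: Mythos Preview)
The paper does not prove Theorem~\ref{theorem:ShramovVologodsky}; it is quoted from \cite{ShramovVologodsky} and used as input. So there is no ``paper's own proof'' to compare against, but your argument can be checked on its own and set beside the techniques the paper develops for its main results.

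Your Part~(i) is correct and self-contained: the reduction to subfields of the degree-$3$ division algebra, the Kummer argument forcing every prime order to be $3$, the norm trick excluding order $9$, and the trichotomy $\dim_{\KK}\KK[\tilde G]\in\{1,3,9\}$ together with the monomial/imprimitivity bound in $\PGL_3$ all go through. This is essentially the argument of \cite[Proposition~1.9]{ShramovVologodsky}.

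Your Part~(ii) follows exactly the strategy the present paper uses for its own Theorem~\ref{theorem:main}: regularise, run the $G$-MMP, and use the birational invariance of the index to eliminate conic bundles and most del Pezzo degrees. Two small points. In degree~$8$ the claim ``$X$ is a smooth quadric surface, so $\delta(X)\mid 2$'' is not justified as stated (a form of $\PP^1\times\PP^1$ need not embed in $\PP^3_{\KK}$); the harmless fix is that two general members of $|-K_X|$ meet in a $0$-cycle of degree $8$, so $\delta(X)\mid 8$ and $3\nmid\delta(X)$, which is all you need. In degree~$6$ your torus argument is fine once one checks that an order-$3$ element of the Weyl group acts without nonzero invariants on the character lattice, so that $T$ is anisotropic and $T(\KK)_{\mathrm{tors}}$ has order at most $3$; the paper instead \emph{excludes} the degree-$6$ case entirely via a fixed-point count for the Cremona-type involution (Lemma~\ref{lemma:dP6-rk-1}), which is shorter but uses the specific $G$-minimality hypothesis. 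Finally, your closing remark that Weinstein's generators ``confine finite subgroups to conjugates of $\Aut(S)$'' overshoots: the paper's Proposition~\ref{proposition:summary} shows this only for non-abelian $G$, while abelian $G$ may genuinely land on a $G$-minimal cubic surface; so that shortcut, as phrased, is not available.
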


In this paper we prove the result making Theorem~\ref{theorem:ShramovVologodsky} more precise.

\begin{theorem}\label{theorem:main}
Let $S$ be a non-trivial Severi--Brauer surface over a field $\KK$ of characteristic zero,
and let $G\subset\Bir(S)$ be a finite group.
The following assertions hold.
\begin{itemize}
\item[(i)] The order of $G$ is odd.

\item[(ii)] The group $G$ is either abelian, or contains a normal abelian subgroup
of index~$3$.

\item[(iii)] If $\KK$ contains all roots of $1$, then
$G$ is an abelian $3$-group of order at most~$27$.
\end{itemize}
\end{theorem}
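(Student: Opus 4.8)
The overall strategy is to run the equivariant Minimal Model Program for the pair $(S, G)$, where $G \subset \Bir(S)$ is a finite group. First I would pass to a smooth projective $G$-surface $S'$ with a $G$-equivariant birational morphism (or just birational map) to $S$ on which $G$ acts biregularly; such a model exists by equivariant resolution. Running the $G$-MMP, we arrive at one of two outcomes: either a $G$-Mori fiber space over a curve, or a $G$-minimal del Pezzo surface. The key point is that since $S$ is a non-trivial Severi--Brauer surface, it has no $\KK$-points, hence no $G$-fixed points are forced, and more importantly $\operatorname{Pic}(S')^G$ must have rank compatible with the class of $-K$ not being divisible in a way that produces a $\KK$-point — one should use that a non-trivial Severi--Brauer surface over $\bar\KK$ is $\PP^2$ with Galois acting via the Brauer class, so the geometric Picard rank over the relevant models is constrained by the $3$-torsion Brauer obstruction. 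I expect that the MMP output must be a del Pezzo surface $X$ with $\rho^G(X)=1$ whose minimal resolution geometrically is $\PP^2$, i.e. $X$ is again a (possibly different) non-trivial Severi--Brauer surface $S''$, and $G$ acts on it biregularly; this reduces us to studying $G \subset \operatorname{Aut}(S'')$.

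Once we are reduced to the biregular case $G \subset \operatorname{Aut}(S'')$ for a non-trivial Severi--Brauer surface $S''$, we exploit the description of $\operatorname{Aut}$ of a Severi--Brauer surface in terms of the corresponding central simple (division) algebra $A$ of degree $3$: namely $\operatorname{Aut}(S'') \cong A^\times / \KK^\times = \operatorname{PGL}_1(A)$, cf. the references to Châtelet / \cite{Artin} / \cite{ShramovVologodsky} cited in the introduction. A finite subgroup $G$ of $\operatorname{PGL}_1(A)$ lifts (after passing to a central extension by $\mumu_n$) to a finite subgroup $\tilde G$ of $A^\times$ generating a $\KK$-subalgebra of $A$; since $A$ is a division algebra of degree $3$, any proper subfield has degree $1$ or $3$ over $\KK$, and any subalgebra is either $\KK$, a cubic field extension, or all of $A$. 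Analyzing these cases: an abelian $G$ comes from a commutative subalgebra, which is a cyclic cubic field extension (or $\KK$ itself), giving an abelian group; a non-abelian $G$ must generate $A$ itself, and then the structure of finite subgroups of $\operatorname{PGL}_1(A)$ for $A$ a degree-$3$ division algebra forces $G$ to contain the image of a maximal cyclic cubic subfield's unit group as a normal abelian subgroup of index $3$, with the quotient acting by the Galois automorphism of that subfield. This yields (ii). For (i), I would observe that $2 \nmid |G|$: an element of even order would give, via the same lifting, a non-trivial $2$-torsion element either in the Brauer-index-$3$ constrained picture or would produce a $\KK$-point / a splitting of $A$, contradicting non-triviality; more directly, the $G$-MMP analysis rules out involutions because any involution in $\operatorname{Bir}$ of a non-trivial Severi--Brauer surface, by Theorem~\ref{theorem:Weinstein} (the generators of $\operatorname{Bir}(S)$ being Geiser-type / quadratic-type maps of order $3$) cannot have order $2$. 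For (iii), when $\KK \supset \mumu_\infty$, Theorem~\ref{theorem:ShramovVologodsky}(i) already gives that $\operatorname{Aut}$ of any non-trivial Severi--Brauer surface is a $3$-group of order at most $27$ with all elements of order $3$; combined with (i) and the reduction to the biregular case, and with the fact that an index-$3$ extension of such a group that is still of odd order and embeds appropriately must itself be a $3$-group of order at most $27$, we conclude $G$ is an abelian $3$-group of order at most $27$ (abelianness following because a non-abelian group of order $27$ has an element which, transported back, would violate the order/triviality constraints).

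The main obstacle I anticipate is the MMP step: controlling the possible $G$-Mori fiber space outputs and showing that a conic bundle over a curve or a del Pezzo surface of degree $\neq 9$ cannot occur (or, if it occurs, cannot support a $G$-action not already accounted for), using the non-existence of $\KK$-points and the $3$-torsion nature of the Brauer class of $S$. In particular one must rule out, or fully analyze, $G$-conic bundles: the base curve has no $\KK$-point issues per se, but the total space being geometrically rational with the right Brauer obstruction is restrictive, and I would use that the generic fiber is a conic without a rational point together with the structure of $\operatorname{Bir}(S)$ from Theorem~\ref{theorem:Weinstein} to close this off. A secondary technical point is making the lifting $\operatorname{PGL}_1(A) \to A^\times$ argument precise over a non-closed field — one works with the central extension and with the fact that a finite subgroup of $\operatorname{PGL}_1(A)$ spans a semisimple (in fact, by divisionality, a field or all of $A$) subalgebra — but this is standard algebra once the geometry is in place.
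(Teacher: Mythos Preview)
Your overall strategy---regularize, run the $G$-MMP, and reduce to the biregular automorphism group of a Severi--Brauer surface---matches the paper's architecture, but your expectation about the MMP output is where the plan breaks down. You write that ``the MMP output must be a del Pezzo surface $X$ with $\rho^G(X)=1$ whose minimal resolution geometrically is $\PP^2$, i.e.\ $X$ is again a \ldots\ non-trivial Severi--Brauer surface.'' The paper does \emph{not} establish this, and in fact explicitly leaves it open (see Question~3.8). What the paper proves (Proposition~\ref{proposition:summary}) is that the $G$-MMP terminates either at a Severi--Brauer surface \emph{or} at a smooth cubic surface $S'$ with $\rkPic(S')=3$ and $\rkPic(S')^G=1$; in the latter case one shows separately that $G$ embeds in $\mumu_3^3$ (Corollary~\ref{corollary:dP3-3-group}), which suffices for all three parts of the theorem. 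Without this cubic-surface analysis your argument has a genuine hole: you have no mechanism to rule out or control the degree-$3$ del Pezzo output, and the bound $27$ in part~(iii) actually comes from that branch. You also do not mention the degree-$6$ case, which is not automatic: ruling it out requires the observation (Lemma~\ref{lemma:dP6-rk-1}) that a $G$-minimal sextic del Pezzo birational to $S$ would force an automorphism of Cremona type, whose four geometric fixed points produce a point of degree coprime to $3$.

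On the biregular side, your proposed approach via $\Aut(S'')\cong\PGL_1(A)$ and the subalgebra structure of a degree-$3$ division algebra is a legitimate alternative to the paper's method, which instead embeds $G$ into $\PGL_3(\bar{\KK})$ and argues with fixed loci on $\PP^2_{\bar{\KK}}$ (Lemma~\ref{lemma:P2}, Lemma~\ref{lemma:SB-Aut}, Corollary~\ref{corollary:SB-Aut}). Your algebraic route could be made to work, but the sketch is too vague at the crucial step: for non-abelian $G$ you assert that a maximal cubic subfield gives a normal abelian subgroup of index $3$, yet you do not explain why the finite image in $A^\times/\KK^\times$ must sit over such a subfield in this way, nor how you exclude the Heisenberg group in part~(iii). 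The paper handles these via concrete fixed-point counts (an involution has a unique isolated fixed point on $\PP^2$; an element of $\HHH_3$ can be chosen with a unique isolated fixed point), which immediately contradict the absence of $\KK$-points. Your invocation of Theorem~\ref{theorem:Weinstein} for part~(i) is off the mark: that theorem gives generators of $\Bir(S)$, not constraints on torsion, and the generating links $\tau_P$, $\eta_P$ are not elements of order $3$.
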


I do not know if the bounds in Theorem~\ref{theorem:main}(ii),(iii) are optimal.
In particular, I am not aware of an example of a finite non-abelian group
acting by birational (or biregular, cf.~Proposition~\ref{proposition:SB-Bir-vs-Aut} below)
automorphisms on a non-trivial Severi--Brauer surface.
Note that
it is easy to construct an example of a non-trivial Severi--Brauer
surface with an action of a group of order~$9$, see Example~\ref{example:cyclic-algebra} below.
In certain cases this is the largest finite subgroup of the automorphism
group of a Severi--Brauer surface; see Lemma~\ref{lemma:27}, which improves the result
of Theorem~\ref{theorem:ShramovVologodsky}(i).
It would be interesting to obtain a complete description of finite groups
acting by biregular and birational automorphisms
on Severi--Brauer surfaces, cf.~\cite{DI}.

Theorem~\ref{theorem:main}(ii) can be reformulated
by saying that the \emph{Jordan constant}
(see e.g.~\mbox{\cite[Definition~1.1]{Yasinsky}} for a definition)
of the birational automorphism group of a
non-trivial Severi--Brauer surface over a field $\KK$ of characteristic zero is at most $3$.
This shows one of the amazing differences between birational geometry of
non-trivial Severi--Brauer surfaces and the projective plane,
since in the latter case the corresponding Jordan constant may be much larger.
For instance, if the base field is algebraically closed, the Jordan constant
of the group of birational automorphisms of $\PP^2$ equals~$7200$,
see~\mbox{\cite[Theorem~1.9]{Yasinsky}}. Moreover, by the remark made after
Theorem~5.3 in~\cite{Serre2009} the multiplicative analog of this constant
for $\PP^2$
equals~\mbox{$2^{10}\cdot 3^4\cdot 5^2\cdot 7$} in the case of the algebraically closed field
of characteristic zero, while by Theorem~\ref{theorem:main}(ii)
a similar constant for a non-trivial Severi--Brauer surface
also equals either~$1$ or~$3$.

To prove Theorem~\ref{theorem:main}, we establish the following intermediate result that
might be of independent interest (see also Proposition~\ref{proposition:summary} below for a more precise
statement).

\begin{proposition}\label{proposition:SB-Bir-vs-Aut}
Let $S$ be a non-trivial Severi--Brauer surface over a field of characteristic zero,
and let $G\subset\Bir(S)$ be a finite non-abelian group.
Then $G$ is conjugate to a subgroup of $\Aut(S)$.
\end{proposition}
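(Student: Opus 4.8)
The plan is to run the $G$-equivariant Minimal Model Program and to show that the only $G$-equivariant minimal model in the birational class of $S$ that can carry a finite non-abelian group action is a Severi--Brauer surface isomorphic to $S$ or to $S^{\op}$. First I would regularize the action: there is a smooth projective surface $\widetilde S$ over $\KK$ with a biregular $G$-action and a $G$-equivariant birational map $\widetilde S\dashrightarrow S$ (here characteristic zero enters, through resolution of singularities). Running the $G$-MMP on $\widetilde S$ produces a surface $X$, still $G$-equivariantly birational to $S$, together with a $G$-equivariant Mori fibration: either $X$ is a del Pezzo surface with $\operatorname{rk}\bigl(\Pic(X)^{G}\bigr)=1$, or there is a $G$-conic bundle $X\to B$ with $\operatorname{rk}\bigl(\Pic(X)^{G}\bigr)=2$. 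Since $S$ becomes $\PP^2$ over $\overline\KK$, the surface $X$ is geometrically rational, so in the second case $B$ is a smooth conic over $\KK$.

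Next I would eliminate every possibility for $X$ except a Severi--Brauer surface, by means of birational invariants that separate $S$ from the other minimal models. The most convenient one is the Amitsur subgroup $\operatorname{Am}(Y)=\ker\bigl(\Br\KK\to\Br\KK(Y)\bigr)$: by Amitsur's theorem $\operatorname{Am}(S)$ is cyclic of order $3$, whereas for a conic bundle $X\to B$ one computes, applying Amitsur's theorem to the generic fibre over $\KK(B)$ and to $B$ over $\KK$, that $\operatorname{Am}(X)$ is killed by a power of $2$; a quadric surface or a del Pezzo surface of degree $5$ has vanishing Amitsur subgroup; a del Pezzo surface $X$ of degree $d<9$ with $\operatorname{rk}\Pic(X)=1$ has $\Pic(X)=\ZZ K_X$ and hence $\operatorname{Am}(X)=0$; and for $d\in\{1,2,4\}$ the group $\operatorname{Am}(X)$ is $2$-primary. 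In addition, for a form of $\mathbb F_1$ and in degree $7$ the sum of all $(-1)$-curves on $\overline X$ is a $G$-invariant class defined over $\KK$ that is not proportional to $K_X$, so $\operatorname{rk}\bigl(\Pic(X)^{G}\bigr)\ge 2$, contradicting the fibration. (One can also argue with the index, a birational invariant of smooth projective surfaces in characteristic zero: $\operatorname{ind}(S)=3$, while conic bundles and quadrics have index prime to $3$.) The only surviving cases are then: $X$ a Severi--Brauer surface; or $X$ a del Pezzo surface of degree $3$ or $6$ with $\operatorname{Am}(X)\cong\ZZ/3\ZZ$, and consequently $\operatorname{rk}\Pic(X)\ge 2$.

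In the first case $X$ is non-trivial (otherwise $S$ would be $\KK$-rational) and birational to $S$; comparing Amitsur subgroups shows its Brauer class is $\pm$ that of $S$, so $X\cong S$ or $X\cong S^{\op}$. Transporting the $G$-action along $X\dashrightarrow S$ realizes $G$, up to conjugacy in $\Bir(S)$, inside $\Aut(X)$, and $\Aut(S^{\op})$ is conjugate to $\Aut(S)$ in $\Bir(S)$; this yields the proposition. In the remaining case $G$-minimality forces $\operatorname{rk}\bigl(\Pic(X)^{G}\bigr)=1$, so $G$ acts on the rank-$\ge2$ lattice $\Pic(X)$ with invariant sublattice exactly $\ZZ K_X$; combining this with the structure of the automorphism groups of forms of del Pezzo surfaces of degrees $3$ and $6$ over $\KK$ --- for degree $6$, $X$ is a form of a toric surface, $\Aut(\overline X)=T\rtimes\mathrm D_6$, the image of $\Aut(X)$ in $\mathrm D_6$ centralizes the Galois image, and non-triviality of the Brauer class of $S$ cuts this image down to a cyclic group of order dividing $3$ acting trivially on the finite torsion of $T^{\Gamma}$ --- one concludes that every finite subgroup of $\Aut(X)$ is abelian. (The oddness of $|G|$, part~(i) of Theorem~\ref{theorem:main}, serves equally well here.) This contradicts the hypothesis on $G$.

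The step I expect to be the main obstacle is exactly this last one, the del Pezzo surfaces of degrees $3$ and $6$. A del Pezzo surface of degree $6$ can genuinely be birational to a non-trivial Severi--Brauer surface --- the blow-up of $S$ at a closed point of degree $3$ in general position is such --- so it is not ruled out by the Amitsur subgroup, and one really has to show that no such surface carries a $G$-minimal action of a finite non-abelian group. This rests on the arithmetic structure theory of forms of these surfaces: the Galois action on the geometric Picard lattice, the resulting Amitsur subgroup, and above all the automorphism groups of the forms in question.
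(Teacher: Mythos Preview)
Your overall architecture---regularize, run the $G$-MMP, then eliminate all terminal models except Severi--Brauer surfaces---matches the paper's Proposition~\ref{proposition:summary}, and the use of index/Amitsur-type invariants to rule out conic bundles and del Pezzo surfaces of degree not divisible by $3$ is equivalent to the paper's Corollary~\ref{corollary:many-dP}. But there is a genuine gap in the step where you land on $X\cong S^{\op}$.

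You write that ``$\Aut(S^{\op})$ is conjugate to $\Aut(S)$ in $\Bir(S)$''. This is false, and the paper says so explicitly just before Corollary~\ref{corollary:conjugate}: the groups $\Aut(S)$ and $\Aut(S^{\op})$ are isomorphic but \emph{not} conjugate in $\Bir(S)$. Any birational map $S^{\op}\dasharrow S$ has base points, so conjugating an automorphism of $S^{\op}$ that moves those base points gives a genuinely non-regular self-map of $S$. What the paper actually proves (Corollary~\ref{corollary:conjugate}) is the much weaker statement that every \emph{finite} subgroup $G\subset\Aut(S^{\op})$ is conjugate into $\Aut(S)$, and this requires work: one must produce a $G$-invariant point $P$ of degree $3$ on $S^{\op}$, so that the elementary link $\tau_P\colon S^{\op}\dasharrow S$ is $G$-equivariant. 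Finding such a $P$ is the content of Corollary~\ref{corollary:SB-Aut}(ii), which rests on the odd-order Lemma~\ref{lemma:SB-Aut} and the fixed-locus analysis of Lemma~\ref{lemma:P2}(ii). You have not supplied any of this, and without it the $S^{\op}$ case does not close.

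Your handling of degrees $3$ and $6$ is also too loose. For degree $6$ you assert that the image of $\Aut(X)$ in the Weyl group is cut down to a cyclic group of order dividing $3$ and hence every finite subgroup of $\Aut(X)$ is abelian. Even granting the first claim, the second does not follow: a finite subgroup of $T\rtimes\mumu_3$ can be non-abelian (e.g.\ the Heisenberg group $\HHH_3$). The paper does not prove that finite subgroups of $\Aut(X)$ are abelian; instead (Lemma~\ref{lemma:dP6-rk-1}) it shows that $\rkPic(X)^G=1$ forces an element of Cremona type in $G$, whose four geometric fixed points yield a closed point of degree prime to $3$, contradicting birationality to $S$. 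For degree $3$ the paper likewise uses fixed-locus arguments (Lemma~\ref{lemma:dP3-not-3-group}) to show $G$ is a $3$-group, and then a centralizer computation in $\WE$ (Corollary~\ref{corollary:dP3-3-group}) to force $G\hookrightarrow\mumu_3^3$, hence abelian. Your parenthetical appeal to Theorem~\ref{theorem:main}(i) is circular here, since that theorem is proved via Proposition~\ref{proposition:summary}, which is exactly what you are trying to establish.
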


It would be interesting to find out if Proposition~\ref{proposition:SB-Bir-vs-Aut}
holds for finite abelian subgroups of birational automorphism groups
of non-trivial Severi--Brauer surfaces.

\smallskip
The plan of the paper is as follows. In~\S\ref{section:birational} we study surfaces that may be birational to
a non-trivial Severi--Brauer surface.
In~\S\ref{section:G-birational} we study finite groups acting on such surfaces.
In~\S\ref{section:bounds} we prove Proposition~\ref{proposition:SB-Bir-vs-Aut} and Theorem~\ref{theorem:main}.

\smallskip
\textbf{Notation and conventions.}
Throughout the paper assume that all varieties are projective.
We denote by $\mumu_n$ the cyclic group of order $n$,
and by $\SSS_n$ the symmetric group on $n$ letters.

Given a field $\KK$, we denote by $\bar{\KK}$ its algebraic closure.
For a variety $X$ defined over $\KK$, we denote by $X_{\bar{\KK}}$
its scalar extension to~$\bar{\KK}$.
By a point of degree $d$ on a variety defined over some field $\KK$ we mean a closed point whose
residue field is an extension of~$\KK$ of degree~$d$; a $\KK$-point is a point of degree~$1$.

By a linear system on a variety~$X$ over a field $\KK$ we mean a twisted linear subvariety
of a linear system on $X_{\bar{\KK}}$ defined over~$\KK$; thus, a linear system is not
a projective space in general, but a Severi--Brauer variety.

By a degree of a subvariety $Z$ of a Severi--Brauer variety $X$ over a field $\KK$
we mean the degree of the subvariety $Z_{\bar{\KK}}$ of $X_{\bar{\KK}}\cong\PP^n_{\bar{\KK}}$ with respect to
the hyperplane in~$\PP^n_{\bar{\KK}}$.

For a Severi--Brauer variety
$X$ corresponding to the central simple algebra $A$,
we denote by~$X^{\op}$ the Severi--Brauer variety corresponding to the algebra opposite to~$A$.

A del Pezzo surface is a smooth surface with an ample anticanonical class.
For a del Pezzo surface $S$, by its degree we mean its (anti)canonical degree~$K_S^2$.

Let $X$ be a variety over an algebraically closed field with an action of
a group $G$, and let $g$ be an element of $G$. By $\Fix_X(G)$ and $\Fix_X(g)$ we denote
the loci of fixed points of the group $G$ and the element $g$ on $X$, respectively.

\smallskip
\textbf{Acknowledgements.}
I am grateful to  A.\,Trepalin and V.\,Vologodsky for many useful discussions.
I was partially supported by
the HSE University Basic Research Program,
Russian Academic Excellence Project~\mbox{``5-100''},
by the Young Russian Mathematics award, and by the Foundation for the
Advancement of Theoretical Physics and Mathematics ``BASIS''.

\section{Birational models of Severi--Brauer surfaces}
\label{section:birational}

In this section we study surfaces that may be birational to
a non-trivial Severi--Brauer surface.

The following general result is sometimes referred to as the theorem of Lang and Nishimura.

\begin{theorem}[{see e.g.~\cite[Proposition~IV.6.2]{Kollar-RatCurves}}]
\label{theorem:Lang-Nishimura}
Let $X$ and $Y$ be smooth projective varieties over an arbitrary field $\KK$.
Suppose that $X$ is birational to $Y$. Then $X$ has a $\KK$-point if and only if $Y$ has a $\KK$-point.
\end{theorem}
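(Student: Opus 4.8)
The plan is to deduce the statement from a more flexible assertion suited to an inductive argument, namely: \emph{if $X$ is a variety over $\KK$ possessing a smooth $\KK$-point and $Y$ is a proper variety over $\KK$ admitting a rational map $X\dashrightarrow Y$, then $Y$ has a $\KK$-point.} This flexible version implies the theorem immediately. Since $X$ and $Y$ are smooth and projective, every $\KK$-point of $X$ is a smooth $\KK$-point, a birational map $X\dashrightarrow Y$ is in particular a rational map to the proper variety $Y$, and the hypotheses are symmetric in $X$ and $Y$; hence both implications follow.

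I would prove the flexible assertion by induction on $n=\dim X$. If $n=0$, then near the given point $X$ is $\operatorname{Spec}\KK$, the rational map $X\dashrightarrow Y$ is defined on a dense open subset and hence at that point, and its value is a $\KK$-point of $Y$. If $n\ge 1$, choose a smooth point $p\in X(\KK)$ and let $\pi\colon B=\operatorname{Bl}_p X\to X$ be the blow-up at $p$. Since $p$ is a smooth point, $B$ is again a variety, regular along the exceptional divisor $E\subset B$, and $E$ is isomorphic to $\PP^{n-1}_{\KK}$; in particular $E$ is smooth over $\KK$, irreducible of dimension $n-1$, and has a $\KK$-point. Composing with $\pi$ turns the given rational map into a rational map $\psi\colon B\dashrightarrow Y$.

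The crucial step is that $E$ is not contained in the indeterminacy locus of $\psi$. Indeed, the local ring of $B$ at the generic point $\eta$ of $E$ is a discrete valuation ring, since $B$ is regular there; as $Y$ is proper, the valuative criterion of properness shows that $\psi$ extends to a morphism in a neighbourhood of $\eta$, that is, $E$ is not contained in $\operatorname{Indet}(\psi)$. Because $E$ is irreducible, the restriction of $\psi$ is then a genuine rational map $E\dashrightarrow Y$; as $E$ is smooth over $\KK$ with a $\KK$-point and $Y$ is proper, the inductive hypothesis applies and furnishes a $\KK$-point of $Y$, completing the induction. The one genuinely substantive input is this appeal to the valuative criterion of properness---equivalently, the fact that a rational map from a variety regular in codimension one to a proper variety is defined away from a closed subset of codimension at least two---which is exactly where regularity of $B$ along $E$ (coming from smoothness of $p$ on $X$) and properness of $Y$ are used. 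Everything else is routine: the behaviour of blow-ups at smooth $\KK$-points, and the fact that a rational map restricts to a well-defined rational map on any subvariety not contained in its indeterminacy locus.
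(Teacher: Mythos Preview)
Your argument is correct; it is the classical blow-up induction proof of the Lang--Nishimura lemma, essentially the one given in the reference \cite[Proposition~IV.6.2]{Kollar-RatCurves}. Note that the paper itself does not supply a proof of this statement at all---it merely cites Koll\'ar---so there is no in-paper argument to compare against; your write-up is a faithful and complete account of the standard proof behind that citation.
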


\begin{corollary}\label{corollary:Lang-Nishimura}
Let $X$ and $Y$ be smooth projective varieties over an arbitrary field $\KK$, and let $r$ be a positive integer.
Suppose that $X$ is birational to $Y$, and $X$ has a point of degree not divisible by~$r$.
Then $Y$ has a point of degree not divisible by~$r$.
\end{corollary}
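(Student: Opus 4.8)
The plan is to reduce the statement about points of degree prime to $r$ to the Lang--Nishimura theorem (Theorem~\ref{theorem:Lang-Nishimura}) by a base-change argument. Suppose $X$ has a closed point $x$ whose residue field $\KK(x)$ is an extension of $\KK$ of degree $d$ with $r\nmid d$. The idea is to extend scalars from $\KK$ to the field $\KK(x)$: over $\KK(x)$ the point $x$ acquires a $\KK(x)$-point lying above it, so $X_{\KK(x)}$ has a $\KK(x)$-point. Since $X$ is birational to $Y$ over $\KK$, the scalar extensions $X_{\KK(x)}$ and $Y_{\KK(x)}$ are birational over $\KK(x)$, and they are still smooth and projective. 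By Theorem~\ref{theorem:Lang-Nishimura} applied over the field $\KK(x)$, it follows that $Y_{\KK(x)}$ has a $\KK(x)$-point, call it $P$.

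Next I would push this point back down to $Y$ over $\KK$. The $\KK(x)$-point $P$ of $Y_{\KK(x)}$ corresponds to a $\KK$-morphism $\operatorname{Spec}\KK(x)\to Y$; its image is a closed point $y\in Y$ whose residue field $\KK(y)$ is a subextension of $\KK\subset\KK(x)$, so $[\KK(y):\KK]$ divides $[\KK(x):\KK]=d$. Since $r\nmid d$ and $[\KK(y):\KK]\mid d$, we get $r\nmid [\KK(y):\KK]$, i.e. $y$ is a point of $Y$ of degree not divisible by $r$. This completes the argument.

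The only mildly delicate point is checking that the relevant constructions behave well under the (possibly inseparable) field extension $\KK\subset\KK(x)$: namely that $X_{\KK(x)}$ and $Y_{\KK(x)}$ remain smooth and projective, and remain birational over $\KK(x)$. Smoothness is preserved under arbitrary field extension, projectivity is obvious, and a birational map defined over $\KK$ base-changes to a birational map over $\KK(x)$ (one may take closures of graphs, or simply note that a dense open $\KK$-isomorphism base-changes to a dense open $\KK(x)$-isomorphism). So there is no real obstacle here; the statement is essentially a formal consequence of Theorem~\ref{theorem:Lang-Nishimura} together with the multiplicativity of degrees of points under field extension. I would present it in exactly this order: base-change to $\KK(x)$, apply Lang--Nishimura, descend the resulting point and bound its degree by divisibility.
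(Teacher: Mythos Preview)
Your argument is correct and is the standard way to deduce this corollary from Lang--Nishimura: base-change to the residue field, apply Theorem~\ref{theorem:Lang-Nishimura}, and push the resulting rational point down to a closed point of bounded degree. The paper itself states Corollary~\ref{corollary:Lang-Nishimura} without proof, treating it as an immediate consequence of Theorem~\ref{theorem:Lang-Nishimura}; your write-up simply makes explicit what the author leaves to the reader, and the details you flag (preservation of smoothness, projectivity, and birationality under base change, and the divisibility $[\KK(y):\KK]\mid d$) are all fine.
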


The following result concerning Severi--Brauer surfaces is well-known.

\begin{theorem}[{see e.g.~\cite[Theorem~53(2)]{Kollar-SB}}]
\label{theorem:SB-point-degree}
Let $S$ be a non-trivial Severi--Brauer surface over an arbitrary field. Then $S$ does not contain points of degree $d$ not divisible by $3$.
\end{theorem}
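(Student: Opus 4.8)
The plan is to reduce the statement to two classical facts about the Brauer group: that the period of a central simple algebra divides its index, and that for a finite extension $L/\KK$ the composite of the restriction and corestriction maps on Brauer groups is multiplication by the degree $[L:\KK]$.

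First I would recall that $S$ corresponds to a central simple $\KK$-algebra $A$ of dimension~$9$, and that $S$ is non-trivial precisely when $A$ is a division algebra. Since $3$ is prime, in this case $A$ has degree (equivalently, index) equal to~$3$, so its Brauer class $\alpha=[A]\in\Br(\KK)$ is non-zero and its order divides~$3$; hence $\alpha$ has order exactly~$3$.

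Next, suppose $S$ contains a closed point $P$ of degree $d$, and put $L=\kappa(P)$, so that $[L:\KK]=d$. Then $P$ determines an $L$-point of the base change $S_L$, and by the criterion recalled in the introduction $S_L\cong\PP^2_L$; equivalently, the restriction $\alpha_L\in\Br(L)$ of $\alpha$ vanishes. Finally I would invoke the identity $\mathrm{cor}_{L/\KK}\circ\mathrm{res}_{L/\KK}=d\cdot\mathrm{id}$ on $\Br(\KK)$, where $\mathrm{cor}_{L/\KK}$ denotes the corestriction; this yields $d\,\alpha=\mathrm{cor}_{L/\KK}(\alpha_L)=0$. Since $\alpha$ has order exactly~$3$, it follows that $3\mid d$, which is the desired assertion.

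There is no real obstacle here beyond quoting the two cohomological facts correctly; the only points needing a word of care are the passage from a closed point of degree $d$ to an $L$-point of $S_L$ with $[L:\KK]=d$, which is immediate from the definition of the residue field, and the remark that a non-split division algebra of degree~$3$ must have period~$3$ rather than~$1$. One could instead argue more geometrically, using that for non-trivial $S$ the generator of $\Pic(S)$ restricts on $S_{\bar{\KK}}\cong\PP^2_{\bar{\KK}}$ to $\OOO_{\PP^2}(3)$ and then comparing degrees of $0$-cycles on $S$ with those on $S_{\bar{\KK}}$; but this relies on essentially the same input (the exact sequence $0\to\Pic(S)\to\Pic(S_{\bar{\KK}})^{\Gal}\to\Br(\KK)$ coming from Hochschild--Serre), so the Brauer-group route seems the most economical.
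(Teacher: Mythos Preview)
Your argument is correct. Note, however, that the paper itself does not prove this theorem: it is simply quoted as a known fact with a reference to \cite[Theorem~53(2)]{Kollar-SB}, so there is no in-paper proof to compare against.

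One small point worth flagging: the statement is made over an \emph{arbitrary} field, so in characteristic~$3$ the residue field $L=\kappa(P)$ need not be separable over~$\KK$, and the Galois-cohomological definition of corestriction does not directly apply. The identity $\mathrm{cor}_{L/\KK}\circ\mathrm{res}_{L/\KK}=[L:\KK]$ still holds in this generality if one defines the corestriction via the norm functor for Azumaya algebras (or for finite flat morphisms), but if you prefer to avoid that discussion you can replace the final step by the elementary module-theoretic fact that any finite extension $L/\KK$ splitting a division algebra $D$ has $\deg(D)\mid[L:\KK]$: the simple $D_L$-module is free as a $D$-module, and comparing $\KK$-dimensions gives the divisibility. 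This is essentially what your alternative Picard-group remark encodes as well.
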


\begin{corollary}\label{corollary:many-dP}
Let $S$ be a non-trivial Severi--Brauer surface over an arbitrary field. Then
$S$ is not birational to any conic bundle, and not birational to any del Pezzo surface of degree
different from $3$, $6$, and $9$.
\end{corollary}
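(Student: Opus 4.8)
The plan is to handle both assertions by the same mechanism. Combining Corollary~\ref{corollary:Lang-Nishimura} (with $r=3$) and Theorem~\ref{theorem:SB-point-degree}, any smooth projective surface birational to the non-trivial Severi--Brauer surface~$S$ fails to contain a closed point of degree prime to~$3$, and therefore also fails to contain an effective $0$-cycle of degree prime to~$3$ (if $\sum_i n_i[P_i]$ is such a cycle, some summand $n_i\deg P_i$, hence some $\deg P_i$, is prime to~$3$). So it is enough to produce, on an arbitrary conic bundle and on an arbitrary del Pezzo surface of degree $d\notin\{3,6,9\}$, an effective $0$-cycle of degree prime to~$3$. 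Here we may assume the base field~$\KK$ is infinite, since over a finite field the Brauer group vanishes and every Severi--Brauer surface is trivial.

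For a del Pezzo surface $Y$ of degree~$d$ the key point is that $\omega_Y^{-1}$ is a genuine line bundle defined over~$\KK$, so $|-K_Y|=\PP(H^0(Y,\omega_Y^{-1}))$ is an honest projective space over~$\KK$ (of dimension $d=K_Y^2$), not a twisted linear system. Since $\KK$ is infinite I can pick two members $D_1,D_2\in|-K_Y|$ defined over~$\KK$ with no common component; then $D_1\cap D_2$ is an effective $0$-cycle on~$Y$, defined over~$\KK$, of degree $D_1\cdot D_2=(-K_Y)^2=d$. When $3\nmid d$ this finishes the del Pezzo case, and it also explains the exceptional degrees: the construction only ever yields a $0$-cycle of degree~$d$, so it is silent for $d\in\{3,6,9\}$ --- which is unavoidable, since cubic surfaces, del Pezzo surfaces of degree~$6$, and the del Pezzo surfaces of degree~$9$ (namely the Severi--Brauer surfaces themselves) can all be birational to non-trivial Severi--Brauer surfaces.

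For a conic bundle $\pi\colon Y\to B$ with $Y$ and $B$ smooth projective, I would first note that if $Y$ is birational to~$S$ then $Y_{\bar{\KK}}$ is rational, hence $B_{\bar{\KK}}\cong\PP^1_{\bar{\KK}}$; such a curve~$B$ has a closed point~$b$ of degree~$1$ or~$2$, because any member of $|-K_B|$ is an effective $0$-cycle of degree~$2$ on~$B$. The fibre $Y_b$ is a conic, possibly degenerate, over the residue field~$\kappa(b)$, and any conic over any field contains a closed point of degree at most~$2$ over that field (for a smooth conic, a degree-$2$ effective anticanonical divisor; for a singular conic, a rational point on its singular locus). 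Consequently $Y$ contains a closed point of degree dividing~$4$, in particular of degree prime to~$3$.

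Putting this together: if a del Pezzo surface of degree $d\notin\{3,6,9\}$, or a conic bundle, were birational to~$S$, then it would contain a closed point of degree prime to~$3$, hence so would~$S$ by Corollary~\ref{corollary:Lang-Nishimura}, contradicting Theorem~\ref{theorem:SB-point-degree}. I do not expect a serious obstacle beyond the cited results; the points needing attention are (i) using $-K_Y$ itself rather than a multiple $-nK_Y$, whose general linear section has degree $n^2d$ and is useless once $3\mid n$, so that the relevant self-intersection is exactly~$d$; (ii) observing that $|-K_Y|$ is a genuine projective space over~$\KK$ (guaranteeing $\KK$-rational members), in contrast to linear systems such as $|H|$ on~$S$; and (iii) replacing the base of the conic bundle by a geometrically rational curve before searching for points of small degree, and disposing of degenerate fibres.
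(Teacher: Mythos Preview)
Your proof is correct and follows essentially the same route as the paper: intersect two general anticanonical divisors on a del Pezzo surface to obtain a $0$-cycle of degree $d$, and on a conic bundle pass from a small-degree point on the base curve to a small-degree point on the fibre; in both cases conclude via Corollary~\ref{corollary:Lang-Nishimura} and Theorem~\ref{theorem:SB-point-degree}. Your additional care---reducing to infinite $\KK$ via Wedderburn, noting that $|-K_Y|$ is a genuine projective space because $\omega_Y^{-1}$ is defined over~$\KK$, and treating degenerate fibres separately---only makes the argument more explicit without changing its substance.
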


\begin{proof}
Suppose that $S$ is birational to a surface $S'$ with a conic bundle structure~\mbox{$\phi\colon S'\to C$}.
Then $C$ is a conic itself, so that $C$ has a point of degree $2$. This implies that
the surface $S'$ has a point of degree $2$ or $4$, and by Corollary~\ref{corollary:Lang-Nishimura}
the surface~$S$ has a point of degree not divisible by~$3$.
This gives a contradiction with Theorem~\ref{theorem:SB-point-degree}.

Now suppose that $S$ is birational to a del Pezzo surface $S'$
of degree $d$ not divisible by $3$. Then the intersection of two general elements
of the anticanonical linear system~$|-K_{S'}|$ is an effective zero-cycle of degree $d$
defined over the base field. Thus $S'$ has a point of degree not divisible by $3$.
By Corollary~\ref{corollary:Lang-Nishimura}  this again gives a contradiction with Theorem~\ref{theorem:SB-point-degree}.
\end{proof}

\begin{corollary}\label{corollary:dP-large-Picard-rank}
Let $S$ be a non-trivial Severi--Brauer surface over an arbitrary field. Then~$S$
is not birational to any del Pezzo surface of degree $6$ of Picard rank greater than~$2$,
and not birational to any del Pezzo surface of degree $3$ of Picard rank greater than~$3$.
\end{corollary}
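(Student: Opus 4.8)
The plan is to run one step of the minimal model program on $S'$ over $\KK$ and to bound the resulting change in Picard rank and anticanonical degree. We use repeatedly that a del Pezzo surface of degree $d$ over an algebraically closed field has Picard rank $10-d$, and we prove the assertion concerning degree $6$ first, so that it becomes available when we treat degree $3$.

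Suppose $S'$ is a del Pezzo surface over $\KK$ that is birational to $S$, with either $K_{S'}^2=6$ and $\rkPic(S')\ge 3$, or $K_{S'}^2=3$ and $\rkPic(S')\ge 4$. In particular $\rkPic(S')\ge 3$, so by the theory of minimal rational surfaces over non-closed fields (see e.g.~\cite{Is-UMN}) the surface $S'$ admits a birational morphism $S'\to S_1$ defined over $\KK$ that contracts a single $\Gal(\bar\KK/\KK)$-orbit of $k\ge 1$ pairwise disjoint $(-1)$-curves. (No extremal contraction to a curve or to a point is possible, as such a contraction would force $\rkPic(S')$ to equal $2$ or $1$ respectively.) Then $S_1$ is again a del Pezzo surface over $\KK$, it is birational to $S$, and
\[
\rkPic(S_1)=\rkPic(S')-1,\qquad K_{S_1}^2=K_{S'}^2+k .
\]
Since $S_1$ is a del Pezzo surface over $\KK$ we also have $\rkPic(S_1)\le\rkPic\big((S_1)_{\bar\KK}\big)=10-K_{S_1}^2$, and combining this with the displayed relations yields $k\le 11-K_{S'}^2-\rkPic(S')$.

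Consider first the case $K_{S'}^2=6$, $\rkPic(S')\ge 3$. Then $k\le 2$, so $K_{S_1}^2=6+k\in\{7,8\}$, and $S_1$ is a del Pezzo surface of degree $7$ or $8$ birational to $S$; this contradicts Corollary~\ref{corollary:many-dP}. Now consider the case $K_{S'}^2=3$, $\rkPic(S')\ge 4$. Then $k\le 4$, so $K_{S_1}^2=3+k\in\{4,5,6,7\}$. If $K_{S_1}^2\in\{4,5,7\}$, then Corollary~\ref{corollary:many-dP} is again contradicted. If $K_{S_1}^2=6$, then $\rkPic(S_1)=\rkPic(S')-1\ge 3>2$, so $S_1$ is a del Pezzo surface of degree $6$ of Picard rank greater than $2$ that is birational to $S$, contradicting the case $K_{S'}^2=6$ already treated. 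This completes the argument.

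The only point requiring care is the appeal to the minimal model program over the (possibly non-closed) field $\KK$: one uses that a del Pezzo surface of Picard rank at least $3$ admits, over $\KK$, a birational contraction of a single Galois-orbit of pairwise disjoint $(-1)$-curves, that such a contraction lowers the Picard rank by exactly one, and that its target is again a del Pezzo surface. These facts are classical, but they are exactly what makes the numerology above work.
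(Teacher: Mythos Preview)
Your proof is correct and follows essentially the same approach as the paper's: contract a single extremal ray to obtain a del Pezzo surface $S_1$ with $\rkPic(S_1)=\rkPic(S')-1$ and $K_{S_1}^2>K_{S'}^2$, then invoke Corollary~\ref{corollary:many-dP} (and, in the degree~$3$ case, the already established degree~$6$ case). The only cosmetic difference is that you bound the degree jump $k$ explicitly via $\rkPic(S_1)\le 10-K_{S_1}^2$, whereas the paper argues directly that $\rkPic(S'')>1$ forces $K_{S''}^2<9$ and $\rkPic(S'')>2$ forces $K_{S''}^2<8$; the content is the same.
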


\begin{proof}
Suppose that $S$ is birational to a del Pezzo surface $S'$ as above.
Then there exists a birational contraction from $S'$ to a del Pezzo surface
$S''$ of degree $K_{S''}^2>K_{S'}^2$ and Picard rank~\mbox{$\rkPic(S'')=\rkPic(S')-1$}.

If $K_{S'}^2=6$ and $\rkPic(S')>2$, then $\rkPic(S'')>1$, and thus~\mbox{$6<K_{S''}^2<9$}.
This gives a contradiction with Corollary~\ref{corollary:many-dP}.

If $K_{S'}^2=3$ and $\rkPic(S')>3$, then~$S''$ is either a del Pezzo surface of degree~$6$
with~\mbox{$\rkPic(S'')>2$}, which is impossible by the above argument, or a del Pezzo surface of degree
not divisible by $3$, which is impossible by Corollary~\ref{corollary:many-dP}.
\end{proof}

To proceed we will need the following general fact about non-trivial Severi--Brauer surfaces.

\begin{lemma}\label{lemma:SB-surface-curves}
Let $S$ be a non-trivial Severi--Brauer surface over an arbitrary field $\KK$, and let~$C$ be a curve on $S$.
Then the degree of $C$ is divisible by $3$.
\end{lemma}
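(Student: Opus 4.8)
The plan is to reduce the statement about a curve $C\subset S$ to a statement about points, so that Theorem~\ref{theorem:SB-point-degree} applies. The key observation is that the degree of a curve $C$ on the Severi--Brauer surface $S$ can be detected by intersecting $C$ with a generic line. More precisely, after passing to $\bar{\KK}$ we have $S_{\bar{\KK}}\cong\PP^2_{\bar{\KK}}$, and if $\deg C = d$, then a general line in $\PP^2_{\bar{\KK}}$ meets $C_{\bar{\KK}}$ transversally in exactly $d$ points. The lines on $S$ form a linear system which, being a twisted form of a $\PP^2$, is itself a Severi--Brauer surface over $\KK$ — in fact it is naturally identified with $S^{\op}$, the Severi--Brauer surface dual to $S$.

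First I would make precise the incidence construction. Let $\check S$ denote the linear system of lines on $S$ (a Severi--Brauer surface over $\KK$), and consider the incidence variety $I\subset S\times\check S$ parametrizing pairs $(p,\ell)$ with $p\in\ell$; this is defined over $\KK$. Intersecting the curve $C\subset S$ with a variable line gives, after the obvious correspondence, a map: a general point of $\check S$ (i.e. a general line $\ell$) pulls back the cycle $C\cap\ell$, an effective zero-cycle of degree $d=\deg C$ on $S$. The curve $C$ itself determines a curve $D\subset\check S$, namely the locus of lines $\ell$ such that $\ell$ is tangent to $C$ or passes through a singular point — but more usefully, the key point is simply that the assignment $\ell\mapsto C\cap\ell$ realizes, over a general $\KK$-rational behaviour, a degree-$d$ zero-cycle. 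Concretely: pick a general point $\ell_0$ of $\check S_{\bar\KK}$; there is a point of $\check S$ of some degree $e$ coprime to... no — here I must be careful, since $\check S$ itself, being non-trivial, has no low-degree points. So instead the argument should go the other way.

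The cleaner approach: consider the incidence variety $I\subset S\times\check S$ and its two projections $\pi_S\colon I\to S$ and $\pi_{\check S}\colon I\to\check S$. The fiber of $\pi_S$ over a point of $S$ is a line in $\check S$, so $\pi_S$ realizes $I$ as a $\PP^1$-bundle over $S$; in particular $I$ is smooth and projective. Now restrict to the curve $C$: let $I_C=\pi_S^{-1}(C)$, a surface mapping to $C$ with $\PP^1$-fibers, hence $I_C$ has a point of degree dividing $2\deg(\mathrm{pt})$ for any point of $C$ — but $C$ is a curve on $S$, so it too may only have points of degree divisible by $3$ if $\deg C$ is. This is circular. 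The genuinely effective input must be that $\check S$ \emph{does} have points of controlled degree coming from $C$: namely $\pi_{\check S}(I_C)$ is a curve in $\check S$, call it $D_C$, and by construction the fiber of $\pi_{\check S}\colon I_C\to D_C$ over a general point $\ell$ of $D_C$ is the finite set $C\cap\ell$, a zero-cycle of degree $d$. So for a general $\bar\KK$-point $\ell$ of $D_C$, the scheme $C\cap\ell$ is a degree-$d$ subscheme of $S$ whose residue structure, if $\ell$ could be taken $\KK$-rational of degree $1$, would give a point of degree $\le d$ on $S$. Since $\check S$ is non-trivial, $\ell$ cannot be $\KK$-rational; but we only need: $D_C\subset\check S$ is a curve, and by Theorem~\ref{theorem:SB-point-degree} applied to $\check S$ (which is non-trivial because $S$ is), or rather by an inductive/bootstrap use of the very statement we are proving for $\check S$...

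I realize the truly robust route, and the one I would commit to, is the following degree computation over $\bar\KK$ together with a Galois-descent/divisibility argument. Over $\bar\KK$, fix the isomorphism $S_{\bar\KK}\cong\PP^2$ and let $H$ be the hyperplane class; then $\mathrm{Pic}(S_{\bar\KK})=\ZZ\cdot H$ and $C_{\bar\KK}\sim d\cdot H$ where $d=\deg C$. The Galois group $\Gamma=\mathrm{Gal}(\bar\KK/\KK)$ acts on $\mathrm{Pic}(S_{\bar\KK})$, trivially since it is generated by $H$ up to sign and must preserve effectivity and the canonical class; so $\mathrm{Pic}(S)^\Gamma=\mathrm{Pic}(S_{\bar\KK})$. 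The obstruction to $H$ itself being defined over $\KK$ lies in $\mathrm{Br}(\KK)$ and is exactly the class $[A]$ of the central simple algebra, of order $3$ since $S$ is a non-trivial Severi--Brauer \emph{surface} (the period equals the index, which is $3$). Now $C$ is defined over $\KK$, so its class $d\cdot H\in\mathrm{Pic}(S_{\bar\KK})=\mathrm{Pic}(S)^\Gamma$ must come from an actual line bundle on $S$, i.e. lie in the image of $\mathrm{Pic}(S)\to\mathrm{Pic}(S)^\Gamma$. The cokernel of this map injects into $\mathrm{Br}(\KK)$ and is generated by the image of $H$, which has order $3$; hence $d\cdot H$ lifts to $\mathrm{Pic}(S)$ iff $3\mid d$. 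The main obstacle — and the step I would expect to take the most care — is pinning down precisely that the class of $H$ in the relevant Brauer-type cokernel has order exactly $3$ (equivalently, that $S$ has no line bundle of degree $1$ or $2$), which is really the content of why $\mathrm{Pic}(S)$ is $3\ZZ\cdot H$ rather than $\ZZ\cdot H$; this is standard for Severi--Brauer varieties (the minimal positive degree of a line bundle equals the index of $A$), and one may alternatively deduce it directly from Theorem~\ref{theorem:SB-point-degree} by noting that a degree-$d$ line bundle with $3\nmid d$ would, via sections or via a suitable linear-algebra argument, produce a point of degree prime to $3$. Once that is in hand, the lemma follows immediately: $\deg C = d$ is a multiple of $3$.
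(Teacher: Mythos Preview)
Your final argument---computing $\Pic(S_{\bar\KK})=\ZZ\cdot H$, observing that the class $d\cdot H$ of $C$ must lie in the image of $\Pic(S)\to\Pic(S_{\bar\KK})^{\Gal(\bar\KK/\KK)}$, and using that the cokernel injects into $\Br(\KK)$ with the image of $H$ having order exactly~$3$---is correct and is precisely the paper's proof, which invokes the same exact sequence $1\to\Pic(S)\to\Pic(S_{\bar\KK})^{\Gal(\bar\KK/\KK)}\to\Br(\KK)_3$ and the nontriviality of the Brauer class. Your incidence-variety detours are, as you yourself concede, inconclusive; but the paper does record (in a remark following the lemma) a geometric alternative in the spirit you were reaching for: rather than intersecting $C$ with a line, intersect it with a general translate $C'$ under $\Aut(S)$, producing a $\KK$-rational effective zero-cycle of degree $(\deg C)^2$, and then apply Theorem~\ref{theorem:SB-point-degree}.
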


\begin{proof}
It is well-known (see for instance \cite[Exercise~3.3.5(iii)]{GS})
that there exists an exact sequence of groups
$$
1\to\Pic(S)\to\Pic(S_{\bar{\KK}})^{\Gal(\bar{\KK}/\KK)}\stackrel{b}\to \Br(\KK)_3,
$$
where $\Br(\KK)_3$ is the $3$-torsion part of the Brauer group of $\KK$. Furthermore, the image
of~$b$ is non-trivial since the Severi--Brauer surface
$S$ is non-trivial, see for instance \cite[Exercise~3.3.4]{GS}.
This means that any line bundle on $S$ has degree divisible by $3$, and the assertion follows.
\end{proof}

\begin{remark}
For an alternative proof of Lemma~\ref{lemma:SB-surface-curves},
one can consider the image $C'$ of~$C$ under a general
automorphism of $S$
(see \cite[Lemma~4.1]{ShramovVologodsky} for a description of the automorphism group of~$S$).
Then the zero-cycle $Z=C\cap C'$ is defined over $\KK$ and has degree coprime to~$3$,
so that the assertion follows from Theorem~\ref{theorem:SB-point-degree}.
\end{remark}

Given $d\le 6$ distinct points $P_1,\ldots,P_d$ on $\PP^2_{\bar{\KK}}$ over an algebraically closed field $\bar{\KK}$, we will say that
they are \emph{in general position} if no three of them are contained in a line,
and all six (in the case $d=6$) are not
contained in a conic (cf.~\mbox{\cite[Remark~IV.4.4]{Manin}}). If~$P$ is a point of degree $d$ on a Severi--Brauer
surface $S$ over a perfect field~$\KK$, we will say that~$P$ is in general position if the $d$ points
of the set~\mbox{$P_{\bar{\KK}}\subset\PP^2_{\bar{\KK}}$} are in general position. Note that if $P$ is a point
of degree~$d$ in general
position, then the blow up of $S$ at
$P$ is a del Pezzo surface of degree $9-d$, see
for instance~\mbox{\cite[Theorem~IV.2.6]{Manin}}.

\begin{lemma}\label{lemma:SB-general-position}
Let $S$ be a non-trivial Severi--Brauer surface over a perfect field~$\KK$,
and let~\mbox{$P\in S$} be a point of degree $d$. Suppose that $d=3$ or $d=6$. Then
$P$ is in general position.
\end{lemma}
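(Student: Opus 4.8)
The plan is to argue by contradiction: suppose $P$ is a point of degree $d\in\{3,6\}$ that is \emph{not} in general position, and produce a point on $S$ of degree coprime to $3$, contradicting Theorem~\ref{theorem:SB-point-degree}. The set $P_{\bar\KK}\subset\PP^2_{\bar\KK}$ is a $\Gal(\bar\KK/\KK)$-orbit of $d$ distinct points; failure of general position means that either (a) three of these points lie on a line, or (b) $d=6$ and all six lie on a conic. In each case the offending subscheme (the line, or the conic, or the sub-configuration of collinear points) is not itself Galois-invariant in general, but the \emph{union of its Galois translates} is, and that is what I will exploit to manufacture low-degree zero-cycles or curves.

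First I would dispose of case (a). Consider all lines in $\PP^2_{\bar\KK}$ passing through at least three of the $d$ points; this is a nonempty finite set permuted by the Galois group, hence the union of these lines is a curve $L$ defined over $\KK$. By Lemma~\ref{lemma:SB-surface-curves} the degree of $L$ — which is the number $k$ of such lines, counted appropriately — must be divisible by $3$. Now intersect the subscheme $P$ (of degree $d$) with $L$: the points of $P$ lying on $L$ form a Galois-invariant subset, hence a point $P'$ of $S$ whose degree $d'$ divides $d$ and is at least $3$ (since at least one line contains $\ge 3$ of them). If $d=3$ then $d'=3$ and all three points of $P$ are collinear, lying on a single Galois-invariant line — but a line on $S$ has degree $1$, contradicting Lemma~\ref{lemma:SB-surface-curves}. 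If $d=6$, then either $d'=3$, forcing a Galois-stable partition of the six points into two triples each spanning a line; the two lines are either both defined over $\KK$ (each of degree $1$, contradiction) or swapped by Galois (so their union has degree $2$, again contradicting Lemma~\ref{lemma:SB-surface-curves}); or $d'=6$, meaning every point of $P$ lies on one of the collinear triples, and a short combinatorial analysis of how six points can be covered by lines each meeting $\ge 3$ of them (the possibilities are limited: one line of $4$ or more, or several lines of exactly $3$) again yields in each case a Galois-invariant curve of degree not divisible by $3$ or a point of degree coprime to $3$.

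For case (b), suppose $d=6$ and the six points $P_{\bar\KK}$ lie on a conic $Q$ in $\PP^2_{\bar\KK}$. Six general points determine a unique conic, so if the points fail to be in general position via a conic, that conic is canonically associated to $P_{\bar\KK}$ and hence is Galois-invariant: $Q$ is defined over $\KK$. But then $Q$ is a conic on $S$ of degree $2$, contradicting Lemma~\ref{lemma:SB-surface-curves}. (If the six points happen to lie on a \emph{degenerate} conic, i.e. a pair of lines or a double line, that reduces to case (a) with a line containing $\ge 3$ of the points, already handled.)

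The main obstacle is the bookkeeping in case (a) when $d=6$: one must enumerate the ways a Galois orbit of six points can be "covered by collinear triples" and check that each configuration is incompatible with $S$ being a non-trivial Severi--Brauer surface. The clean way to organize this is to always pass to a Galois-invariant object — the union of all lines through $\ge 3$ points, or the conic through all $6$ — and apply Lemma~\ref{lemma:SB-surface-curves} (degree of any curve on $S$ divisible by $3$) together with Theorem~\ref{theorem:SB-point-degree} (no points of degree coprime to $3$); since $1$, $2$, $4$, $5$ are all coprime to $3$, almost every sub-configuration is immediately excluded, and only a bounded number of cases need individual attention. I expect the whole argument to come down to these divisibility constraints applied to the finitely many combinatorial types of special position for $3$ or $6$ points in $\PP^2$.
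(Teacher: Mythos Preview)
Your plan coincides with the paper's: produce a Galois-invariant curve of degree prime to~$3$ (contradicting Lemma~\ref{lemma:SB-surface-curves}) or exploit the Galois action on~$P_{\bar\KK}$. The case $d=3$ and the conic case for $d=6$ match the paper exactly. Two things need fixing in your $d=6$ collinearity analysis.

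First, your subcase $d'=3$ is argued incorrectly: if only three of the six points meet the union $L$ of all lines through $\ge 3$ points, nothing forces the remaining three to be collinear. In fact this subcase is empty, since $P_{\bar\KK}\cap L$ is a Galois-invariant subset of the single orbit~$P_{\bar\KK}$, so its cardinality is $0$ or~$6$; you should invoke the transitivity you noted at the outset. Second, and more importantly, the ``short combinatorial analysis'' you defer is where the paper does the real work, and one configuration is \emph{not} killed by divisibility alone. When no four points are collinear but three are, the paper shows that the number of lines through exactly three of the six points is $2$, $3$, or~$4$; the cases $2$ and~$4$ contradict Lemma~\ref{lemma:SB-surface-curves}, but $3$ does not. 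In that configuration the three lines form a triangle whose vertices are three of the~$P_i$ (each on two of the lines) while the other three~$P_i$ each lie on a single line; this partitions $\{P_1,\ldots,P_6\}$ into two Galois-stable halves of size~$3$, contradicting transitivity --- not degree-divisibility. So the orbit structure must be used \emph{inside} the case analysis, not only to set it up; your closing sentence, which promises everything reduces to divisibility by~$3$, is not quite right.
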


\begin{proof}
Suppose that $d=3$ and $P$ is not in general position. Then the three points
of $P_{\bar{\KK}}$ are contained in some line $L$ on $\PP^2_{\bar{\KK}}$.
The line $L$ is $\Gal(\bar{\KK}/\KK)$-invariant
and thus defined over $\KK$,
which is impossible by Lemma~\ref{lemma:SB-surface-curves}.

Now suppose that $d=6$ and $P$ is not in general position. Let $P_{\bar{\KK}}=\{P_1,\ldots,P_6\}$.
If at least four of the points~\mbox{$P_1,\ldots,P_6$} are contained in a line, then a line with such a property is unique,
and we again get a contradiction with Lemma~\ref{lemma:SB-surface-curves}.

Assume that no four of the points~\mbox{$P_1,\ldots,P_6$} are contained in a line, but some three of them (say,
the points~$P_1$, $P_2$, and $P_3$) are contained in a line which we denote by $L$. The line $L$ is not $\Gal(\bar{\KK}/\KK)$-invariant by Lemma~\ref{lemma:SB-surface-curves},
so that there exists a line $L'\neq L$ that is $\Gal(\bar{\KK}/\KK)$-conjugate to~$L$.
If $L'$ does not pass through any of the points $P_1$, $P_2$, and~$P_3$, then $L$ and $L'$ are the only
lines in $\PP^2_{\bar{\KK}}$ that contain three of the points $P_1,\ldots,P_6$. This gives a contradiction
with Lemma~\ref{lemma:SB-surface-curves}. Hence, up to relabelling the points, we may assume that $P_3=L\cap L'$,
and the points $P_4$ and $P_5$ are contained in~$L'$.
Let $L_{ij}$ be the line passing through the points $P_i$ and $P_j$, where $i\in\{1,2\}$ and $j\in\{4,5\}$.
Note that $L_{ij}$ are pairwise different, none of them coincides with $L$ or $L'$, and no three of them
intersect at one point.
If the point $P_6$ is not contained in any of the lines $L_{ij}$, then
$L$ and $L'$ are the only
lines that contain three of the points $P_1,\ldots,P_6$, which is impossible by Lemma~\ref{lemma:SB-surface-curves}.
If $P_6$ is an intersection point of two of the lines $L_{ij}$, then
there are exactly four
lines that contain three of the points $P_1,\ldots,P_6$, which is also impossible by Lemma~\ref{lemma:SB-surface-curves}.
Thus, we see that $P_6$ must be contained in a unique line among $L_{ij}$, say, in $L_{15}$. Now there are exactly three lines
that contain three of the points $P_1,\ldots,P_6$, namely, $L$, $L'$, and $L_{15}$.
We see that each of the points $P_1$, $P_3$, and $P_5$ is contained in two of these lines,
while each of the points $P_2$, $P_4$, and $P_6$ is contained in a unique such line. However, the Galois group~\mbox{$\Gal(\bar{\KK}/\KK)$} acts
transitively on the points~\mbox{$P_1,\ldots, P_6$}, which gives a contradiction.

Therefore, we may assume that the points $P_1,\ldots, P_6$ are contained in some irreducible conic~$C$.
Obviously, such a conic is unique, and thus $\Gal(\bar{\KK}/\KK)$-invariant. This again gives a contradiction
with Lemma~\ref{lemma:SB-surface-curves}.
\end{proof}

Let $S$ be a non-trivial Severi--Brauer surface over a perfect field~$\KK$, and let~$A$
be the corresponding central simple algebra.
Recall that $S^{\op}$ the Severi--Brauer surface corresponding to the algebra opposite to~$A$.
There are two classes of interesting birational maps from $S$ to $S^{\op}$ which we describe below
(cf. \cite[Lemma~3.1]{IskovskikhTregub}, or  cases~(c) and~(e) of~\cite[Theorem~2.6(ii)]{Is-UMN}).

Let $P$ be a point of degree $3$ on $S$. Then $P$ is in general position by Lemma~\ref{lemma:SB-general-position}.
Blowing up $P$ and blowing down the proper transforms of the three lines on $S_{\bar{\KK}}\cong\PP^2_{\bar{\KK}}$
passing through the pairs of points of $P_{\bar{\KK}}$,
we obtain a birational map $\tau_P$ to another Severi--Brauer surface $S'$.
This map is given by the linear system of conics passing through $P$.

Similarly, let $P$ be a point of degree $6$ on $S$. Then $P$ is in general position by Lemma~\ref{lemma:SB-general-position}.
Blowing up $P$ and blowing down the proper transforms of the six conics on~\mbox{$S_{\bar{\KK}}\cong\PP^2_{\bar{\KK}}$}
passing through the quintuples of points of $P_{\bar{\KK}}$,
we obtain a birational map $\eta_P$ to another Severi--Brauer surface $S'$.
This map is given by the linear system of quintics singular at~$P$ (or, in other words, at each point of~$P_{\bar{\KK}}$).

In both of the above cases one has $S'\cong S^{\op}$. This follows from a well-known general
fact about the degrees of birational maps between Severi--Brauer varieties with given classes,
see for instance~\cite[Exercise~3.3.7(iii)]{GS}.
For more details on the maps $\tau_P$, see~\mbox{\cite[\S2]{Corn}}.

\begin{remark}\label{remark:Amitsur}
It is known that $S$ and $S^{\op}$ are the only Severi--Brauer surfaces birational to~$S$, see for instance~\cite[Exercise~3.3.6(v)]{GS}.
\end{remark}

The following theorem was first published in~\cite{Weinstein}
(see also~\cite{Weinstein-new}). It can be also obtained
as a particular case of a much more general result, see~\cite[Theorem~2.6]{Is-UMN}.
We provide a sketch of its proof for the reader's convenience.

\begin{theorem}\label{theorem:Weinstein}
Let $S$ be a non-trivial Severi--Brauer surface over a perfect field $\KK$, and let $S'$ be
a del Pezzo surface over $\KK$ with $\rkPic(S')=1$.
Suppose that $S$ is birational to~$S'$. Then either
$S'\cong S$, or $S'\cong S^{\op}$. Moreover, any birational map~\mbox{$\theta\colon S\dasharrow S'$}
can be written as a composition
$$
\theta=\theta_1\circ\ldots\circ\theta_k,
$$
where each of the maps $\theta_i$ is either an automorphism, or a map $\tau_P$ or $\eta_P$ for some point~$P$ of
$S$ or~$S^{\op}$.
\end{theorem}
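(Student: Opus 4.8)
The plan is to run the two-dimensional Noether--Fano--Iskovskikh program over $\KK$ (this is the method of \cite{Is-UMN}, of which the statement is a special case): one factors an arbitrary birational map $\theta$ into ``elementary links'', and then uses the Severi--Brauer hypothesis to recognise that every link must be one of the maps $\tau_P$ or $\eta_P$. So I would assume $\theta$ is not an isomorphism and introduce the usual data: let $\HHH$ be the proper transform on $S$ of the ample generator of $\Pic(S')$; over $\bar{\KK}$ it is a linear system of plane curves of some degree $n$, so that $K_S+\frac{3}{n}\HHH$ is numerically trivial. Note that $\theta$ is an isomorphism exactly when $\HHH$ is base-point free: in that case $\HHH$ realizes $\theta$ as a morphism $S\to S'$, which must be an isomorphism since $\rkPic(S)=1$, and $S'\cong S$. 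The aim is therefore to reach this situation by making $n$ strictly decrease.

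The heart of the matter is the following. By the Noether--Fano inequality, applicable because $\theta$ is not an isomorphism and $\rkPic(S')=1$, the pair $\bigl(S,\frac{3}{n}\HHH\bigr)$ is not canonical, so $\HHH$ has a maximal base point $P$, which is a closed point of $S$, with $m:=\mult_P\HHH>\frac{n}{3}$. Resolving $\theta$ on a common smooth model over $\bar{\KK}$ and comparing self-intersection numbers gives $\sum_i m_i^2<n^2$, the sum running over all base points of $\HHH$; since the $d=\deg P$ distinct geometric points of $P$ each contribute $m^2$ to this sum, we get $d\cdot m^2<n^2$, and together with $m^2>n^2/9$ this forces $d<9$. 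On the other hand $S$ has no $\KK$-points, being a non-trivial Severi--Brauer surface, and by Theorem~\ref{theorem:SB-point-degree} it has no points of degree prime to $3$; hence $d\in\{3,6\}$, and then $P$ is automatically in general position by Lemma~\ref{lemma:SB-general-position}. Consequently the map $\tau_P$ (if $d=3$) or $\eta_P$ (if $d=6$) is defined, and it sends $S$ to the Severi--Brauer surface $S^{\op}$.

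It remains to check that untwisting makes progress. Over $\bar{\KK}$ the maps $\tau_P$ and $\eta_P$ are Cremona transformations of $\PP^2$ of degrees $2$ and $5$ based at the distinct, general-position geometric points of $P$, along which $\HHH$ has multiplicity $m$; hence the transform of $\HHH$ by $\tau_P$ has degree $2n-3m$, and its transform by $\eta_P$ has degree $5n-12m$, and in both cases the inequality $m>n/3$ is exactly what makes this strictly less than $n$. Since $S^{\op}$ is again a non-trivial Severi--Brauer surface with $(S^{\op})^{\op}\cong S$, and since $\tau_P^{-1}$ and $\eta_P^{-1}$ are themselves maps of the form $\tau_{P'}$, $\eta_{P'}$, I would then induct on $n$, passing from $\theta$ to $\theta\circ\tau_P^{-1}$ (resp.\ $\theta\circ\eta_P^{-1}$) and building the factorization $\theta=\theta_1\circ\ldots\circ\theta_k$ one link at a time. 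The induction terminates because $n$ strictly decreases, and at the final step the remaining map is an isomorphism onto $S'$ from a Severi--Brauer surface that is either $S$ or $S^{\op}$. This yields at once both the classification of $S'$ and the asserted form of $\theta$.

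The step I expect to be the main obstacle is making the Noether--Fano inequality rigorous in this arithmetic and relative setting: one has to ensure that the maximal singularity can be realized at an honest closed point $P\in S$ rather than at an infinitely near point, that the multiplicities $m_i$ are constant along Galois orbits, and that the degree-drop formulas above hold as stated. All of this is part of Iskovskikh's analysis in \cite{Is-UMN}; the only genuinely Severi--Brauer ingredient is the arithmetic of closed points (Theorem~\ref{theorem:SB-point-degree} and Lemma~\ref{lemma:SB-general-position}), which is what cuts the possible links down to exactly $\tau_P$ and $\eta_P$.
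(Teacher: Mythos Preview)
Your argument is correct and follows essentially the same route as the paper's sketch: pull back a very ample system from $S'$, invoke the Noether--Fano inequality to find a base point $P$ of high multiplicity, bound its degree by the intersection estimate, use Theorem~\ref{theorem:SB-point-degree} and Lemma~\ref{lemma:SB-general-position} to force $d\in\{3,6\}$ with $P$ in general position, untwist by $\tau_P$ or $\eta_P$, and induct on the degree. Your parameter $n$ is the paper's $3\mu$, and your explicit degree drops $2n-3m$ and $5n-12m$ are exactly what the paper obtains by citing \cite[Lemma~3.1]{IskovskikhTregub}; the concern you flag about maximal singularities lying at honest (not infinitely near) closed points is handled, as you note, by the standard monotonicity of multiplicities along infinitely near chains and is part of Iskovskikh's general framework.
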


\begin{proof}[Sketch of the proof]
Let $\theta\colon S\dasharrow S'$ be a birational map, and suppose that $\theta$ is not
an isomorphism.
Choose a very ample linear system $\LLL'$ on $S'$, and let $\LLL$ be its proper transform
on $S$. Then $\LLL$ is a mobile non-empty (and in general incomplete)
linear system on $S$. Write
$$
\LLL\sim_{\QQ} -\mu K_S
$$
for some positive rational number $\mu$; note that $3\mu$ is an integer.
By the Noether--Fano inequality
(see \cite[Lemma~1.3(i)]{Is-UMN}),
one has $\mult_{P}(\LLL)>\mu$ for some point $P$ on $S$. Let~$d$ be the degree of $P$, and let
$L_1$ and $L_2$ be two general members of the linear system~$\LLL$ (defined over~$\bar{\KK}$).
We see that
$$
9\mu^2=L_1\cdot L_2\ge d\mult_{P}(\LLL)^2>d\mu^2,
$$
and thus $d<9$. Hence one has $d=3$ or $d=6$ by Theorem~\ref{theorem:SB-point-degree},
and the point $P$ is in general position by Lemma~\ref{lemma:SB-general-position}.

Consider a birational map $\theta_P$ defined as follows: if $d=3$, we let $\theta_P=\tau_P$,
and if~\mbox{$d=6$}, we let $\theta_P=\eta_P$. Let $\theta^{(1)}=\theta\circ\theta_P^{-1}$. Let
$\LLL_1$ be the proper transform of $\LLL$ (or $\LLL'$) on the
surface~$S^{\op}$, and write
$$
\LLL_1\sim_{\QQ} -\mu_1 K_{S^{\op}}
$$
for some positive rational number $\mu_1$ such that~\mbox{$3\mu_1\in\ZZ$}.
Using the information about~$\theta_P$ provided in~\cite[Lemma~3.1]{IskovskikhTregub}, we see that
$\mu_1<\mu$.
Therefore, applying the same procedure to the surface~\mbox{$S_1=S^{\op}$}, the birational map $\theta^{(1)}$,
and the linear system $\LLL_1$ and arguing by induction, we prove the theorem.
\end{proof}

A particular case of Theorem~\ref{theorem:Weinstein} is the following result that we will need below.

\begin{corollary}\label{corollary:Weinstein}
Let $S$ be a non-trivial Severi--Brauer surface over a perfect field.
Then~$S$ is not birational to any del Pezzo surface $S'$ of degree~$3$ or~$6$
with~\mbox{$\rkPic(S')=1$}.
\end{corollary}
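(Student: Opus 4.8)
The plan is to deduce this immediately from Theorem~\ref{theorem:Weinstein}. Suppose for contradiction that the non-trivial Severi--Brauer surface $S$ is birational to a del Pezzo surface $S'$ of degree $3$ or $6$ with $\rkPic(S')=1$. Since $\rkPic(S')=1$, Theorem~\ref{theorem:Weinstein} applies and tells us that $S'$ is isomorphic either to $S$ or to $S^{\op}$.

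The main point is then a numerical invariant comparison. Both $S$ and $S^{\op}$ are Severi--Brauer surfaces, hence become isomorphic to $\PP^2_{\bar{\KK}}$ after extension of scalars; in particular each is a del Pezzo surface of degree $K_S^2 = K_{S^{\op}}^2 = K_{\PP^2}^2 = 9$. On the other hand, $S'$ has degree $K_{S'}^2 \in \{3,6\}$, so $K_{S'}^2 \neq 9$. Since $K_{S'}^2$ is a birational invariant of smooth projective surfaces is not quite what we need here, but more simply: isomorphic surfaces have equal canonical degrees, so $S'$ can be isomorphic to neither $S$ nor $S^{\op}$. This contradicts Theorem~\ref{theorem:Weinstein}, and the corollary follows.

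I do not expect any genuine obstacle here: the statement is essentially a corollary in the literal sense, extracting the degree-$3$ and degree-$6$ cases from the dichotomy $S'\cong S$ or $S'\cong S^{\op}$ provided by Theorem~\ref{theorem:Weinstein}, and observing that both candidate targets have anticanonical degree $9$. The only thing to be careful about is to invoke Theorem~\ref{theorem:Weinstein} with its hypotheses satisfied, namely that $S'$ is a del Pezzo surface with Picard rank $1$ over a perfect field, all of which are given.

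\begin{proof}
Suppose that $S$ is birational to a del Pezzo surface $S'$ of degree $3$ or $6$ with $\rkPic(S')=1$. By Theorem~\ref{theorem:Weinstein}, we have $S'\cong S$ or $S'\cong S^{\op}$. But $S$ and $S^{\op}$ are Severi--Brauer surfaces, so that
$$
K_S^2=K_{S^{\op}}^2=K_{\PP^2_{\bar{\KK}}}^2=9,
$$
while $K_{S'}^2\in\{3,6\}$. This is a contradiction, since isomorphic surfaces have equal canonical degrees.
\end{proof}
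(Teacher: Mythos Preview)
Your proof is correct and matches the paper's approach exactly: the paper does not give a separate argument but simply states that Corollary~\ref{corollary:Weinstein} is a particular case of Theorem~\ref{theorem:Weinstein}, and your proof spells out precisely why --- namely, that $K_S^2=K_{S^{\op}}^2=9$ while $K_{S'}^2\in\{3,6\}$. (One minor note: the sentence in your discussion beginning ``Since $K_{S'}^2$ is a birational invariant\ldots'' is garbled, but the formal proof that follows is clean and correct.)
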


The part of Corollary~\ref{corollary:Weinstein} concerning del Pezzo surfaces of degree $3$ also follows
from~\mbox{\cite[Chapter~V]{Manin}}.
The part concerning del Pezzo surfaces of degree $6$ can be obtained from
\cite[\S2]{IskovskikhTregub}.

Corollaries~\ref{corollary:dP-large-Picard-rank} and \ref{corollary:Weinstein} show that a del Pezzo surface
of degree $6$ birational to a non-trivial Severi--Brauer
surface must have Picard rank equal to~$2$, and a del Pezzo surface
of degree $3$ birational to a non-trivial Severi--Brauer
surface must have Picard rank equal to~$2$ or~$3$.
In the next section we will obtain further restrictions on such surfaces provided
that they are $G$-minimal with respect to some finite group~$G$.

\section{$G$-birational models of Severi--Brauer surfaces}
\label{section:G-birational}

In this section we study finite groups acting on surfaces birational
to a non-trivial Severi--Brauer surface.

We start with del Pezzo surfaces of degree $6$. Recall that over an algebraically closed field $\bar{\KK}$
of characteristic zero
a del Pezzo surface of degree $6$ is unique up to isomorphism, and its
automorphism group is isomorphic to $(\bar{\KK}^*)^2\rtimes (\SSS_3\times\mumu_2)$.
More details on this can be found in~\cite[Theorem~8.4.2]{Dolgachev}.

Given a del Pezzo surface $S'$ of degree $6$ over an arbitrary field $\KK$ of
characteristic zero, we will call $\SSS_3\times\mumu_2$ its \emph{Weyl group}.
For every element $\theta\in\Aut(S')$ we will refer to the image of $\theta$
under the composition of the embedding $\Aut(S')\hookrightarrow\Aut(S'_{\bar{\KK}})$ with the natural homomorphism
$$
\Aut(S'_{\bar{\KK}})\to\SSS_3\times\mumu_2
$$
as the image of $\theta$ in the Weyl group. Similarly, we will consider the image of the Galois group
$\Gal(\bar{\KK}/\KK)$ in the Weyl group.

Let $\sigma\colon\PP^2\dasharrow\PP^2$ be the \emph{standard Cremona involution},
that is, a birational involution acting as
$$
(x:y:z)\mapsto\left(\frac{1}{x}:\frac{1}{y}:\frac{1}{z}\right)
$$
in some homogeneous coordinates $x$, $y$, and $z$. This involution becomes regular
on the del Pezzo surface of degree $6$ obtained as a blow up of $\PP^2$ at the points
$(1:0:0)$, $(0:1:0)$, and $(0:0:1)$. Let $\hat{\sigma}$ be its image in the Weyl group $\SSS_3\times\mumu_2$.
Then $\hat{\sigma}$ is the generator of the center $\mumu_2$ of the Weyl group.
If one thinks about~\mbox{$\SSS_3\times\mumu_2$}
as the group of symmetries of a regular hexagon, then $\hat{\sigma}$ is an involution
that interchanges the opposite sides of the hexagon or, in other words, a rotation by~$180^\circ$.
If $S'$ is a del Pezzo surface of degree $6$ over some field of characteristic zero and $\theta$ is its automorphism,
we will say that $\theta$ is \emph{of Cremona type} if its image in the Weyl group
coincides with $\hat{\sigma}$.

\begin{lemma}\label{lemma:Cremona-type}
Let $S'$ be a del Pezzo surface of degree $6$ over a field $\KK$ of characteristic zero, and let $\theta$
be its automorphism of Cremona type. Then $\theta$ is an involution that has exactly four fixed points on~$S'_{\bar{\KK}}$.
\end{lemma}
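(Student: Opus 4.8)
The plan is to work over the algebraic closure and exploit the explicit structure of $\Aut(S'_{\bar{\KK}})$ recalled above. Set $X=S'_{\bar{\KK}}$, so $\Aut(X)\cong(\bar{\KK}^*)^2\rtimes(\SSS_3\times\mumu_2)$; let $T\cong(\bar{\KK}^*)^2$ be the acting torus, let $U\subset X$ be its open orbit (the complement of the hexagon of six $(-1)$-curves), and let $\sigma\in\Aut(X)$ be the automorphism induced by the standard Cremona involution (regular on $X$ after the blow-up, as recalled above). Since $\theta$ is of Cremona type, the image of $\theta_{\bar{\KK}}$ in the Weyl group is $\hat{\sigma}$, the generator of the central factor $\mumu_2$; as $\sigma$ itself maps to $\hat{\sigma}$, this gives $\theta_{\bar{\KK}}=t\circ\sigma$ for some $t\in T$. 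From now on I write $\theta$ for $\theta_{\bar{\KK}}$, which suffices since both assertions of the lemma can be tested over $\bar{\KK}$.

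First I would show $\theta$ is an involution. The Cremona involution $\sigma$ normalizes $T$ and acts on the open torus by inversion (that is its defining action on $U$), hence $\sigma t\sigma^{-1}=t^{-1}$; combined with $\sigma^{2}=\operatorname{id}$ this yields $\theta^{2}=t\,(\sigma t\sigma^{-1})\,\sigma^{2}=t\cdot t^{-1}=\operatorname{id}$. Since the image of $\theta$ in the Weyl group is non-trivial we have $\theta\neq\operatorname{id}$, so $\theta$ is an involution.

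Next I would compute $\Fix_X(\theta)$. The point $(1:1:1)$ lies in $U$ and is fixed by $\sigma$, so identifying $U$ with $T$ via this point we see that $\sigma$ acts on $U$ by inversion and $t$ by translation; hence $\theta$ acts on $U$ by $x\mapsto t\,x^{-1}$, and its fixed points in $U$ are the solutions of $x^{2}=t$. In characteristic zero the squaring map $(\bar{\KK}^{*})^{2}\to(\bar{\KK}^{*})^{2}$ is surjective with kernel of order $4$, so there are exactly four such points. To finish I would rule out fixed points on the boundary $X\setminus U$: the element $\hat{\sigma}$ acts on the hexagon of $(-1)$-curves by the rotation through $180^{\circ}$, so it sends each $(-1)$-curve $E$ to the opposite one, while $t$ preserves each $(-1)$-curve; thus $\theta(E)$ is the curve opposite to $E$, which is disjoint from $E$, so $E\cap\theta(E)=\emptyset$ contains no fixed point. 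Therefore $\Fix_X(\theta)$ equals the four-point set $\{x\in U\mid x^{2}=t\}$, which proves the lemma.

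The only steps that require care are the two bookkeeping facts about the Weyl-group element $\hat{\sigma}$ used above: that $\sigma$ acts on the open torus by inversion (whence $\sigma t\sigma^{-1}=t^{-1}$ and $\theta^{2}=\operatorname{id}$), and that $\hat{\sigma}$ interchanges opposite $(-1)$-curves of the hexagon (whence $E\cap\theta(E)=\emptyset$). Both are readily extracted from the action of the Cremona involution on $\Pic(X)$ — it fixes the canonical class, sends the pullback of a line to twice itself minus the sum of the three exceptional classes, and swaps each exceptional curve with the strict transform of the line through the other two blown-up points — or, equivalently, read off the standard identification of $\SSS_{3}\times\mumu_{2}$ with the symmetry group of a regular hexagon. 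Everything else is routine.
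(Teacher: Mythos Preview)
Your proof is correct and follows essentially the same approach as the paper. Both arguments write $\theta=t\sigma$ with $t$ in the torus and reduce the fixed-point count to the fact that $t$ has exactly four square roots in $(\bar{\KK}^*)^2$: the paper does this by conjugating $\theta$ to $\sigma$ via the coordinate change $(x:y:z)\mapsto(\sqrt{\alpha}x:\sqrt{\beta}y:\sqrt{\gamma}z)$ on~$\PP^2_{\bar{\KK}}$ and then listing the four fixed points of $\sigma$ explicitly, while you solve $x^2=t$ directly on the open torus orbit of the del Pezzo surface; the boundary argument (no fixed points on the hexagon because $\hat{\sigma}$ swaps opposite, hence disjoint, $(-1)$-curves) is the same in both.
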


\begin{proof}
Using a birational contraction $S'_{\bar{\KK}}\to\PP^2_{\bar{\KK}}$,
consider the automorphism $\theta$
as a birational automorphism of $\PP^2_{\bar{\KK}}$.
Then $\theta$ can be represented as a composition of the standard Cremona
involution $\sigma$ with some element of the standard torus acting on $\PP^2_{\bar{\KK}}$.
Thus, one can choose homogeneous coordinates $x$, $y$, and $z$
on $\PP^2_{\bar{\KK}}$ so that $\theta$ acts as
$$
(x:y:z)\mapsto\left(\frac{\alpha}{x}:\frac{\beta}{y}:\frac{\gamma}{z}\right)
$$
for some non-zero $\alpha,\beta,\gamma\in\bar{\KK}$.
Therefore, $\theta$ is conjugate to the involution~$\sigma$ via the automorphism
$$
(x:y:z)\mapsto (\sqrt{\alpha}x:\sqrt{\beta}y:\sqrt{\gamma}z).
$$
This shows that $\theta$ has the same number of fixed points on $\PP^2_{\bar{\KK}}$ as $\sigma$,
while it is easy to see that the fixed points of the latter are the four points
$$
(1:1:1),\quad (-1:1:1), \quad (1:-1:1),\quad (1:1:-1).
$$
It remains to notice that a fixed point of $\theta$ on $S'_{\bar{\KK}}$ cannot be contained
in a $(-1)$-curve, and thus all of them are mapped to fixed points of $\theta$ on~$\PP^2_{\bar{\KK}}$.
\end{proof}

\begin{lemma}\label{lemma:dP6-rk-1}
Let $S$ be a non-trivial Severi--Brauer surface over a field $\KK$ of characteristic zero, and let
$S'$ be a del Pezzo surface of degree $6$ over $\KK$. Suppose that there exists a finite group
$G\subset\Aut(S')$ such that $\rkPic(S')^G=1$. Then $S'$ is not birational to $S$.
\end{lemma}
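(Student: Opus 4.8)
The plan is to reduce to the case when $S'$ is $G$-minimal and then exploit the existence of an automorphism of Cremona type. First I would argue that we may assume $S'$ is $G$-minimal: if not, we run a $G$-equivariant minimal model program, obtaining a $G$-equivariant birational contraction from $S'$ to either a $G$-minimal del Pezzo surface or a $G$-minimal conic bundle, all still birational to $S$; the conic bundle case is excluded by Corollary~\ref{corollary:many-dP}, and by Corollary~\ref{corollary:many-dP} again the resulting del Pezzo surface has degree $3$, $6$, or $9$. A degree $9$ surface with a $G$-point or at least without the Brauer obstruction would have to be $S$ or $S^{\op}$ itself, and degree $3$ can be handled together with (or deferred to) the degree $6$ analysis; the cleanest route is to observe that the contracted curves on $S'$ have degrees divisible by $3$ only if they come from the Severi--Brauer structure, so I would instead directly analyze the $G$-action on $S'$ of degree $6$ with $\rkPic(S')^G=1$, which is what the hypothesis gives.

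So assume $\rkPic(S')^G = 1$ with $S'$ of degree $6$. The key structural input is the exact sequence describing $\Aut(S'_{\bar\KK}) \cong (\bar\KK^*)^2 \rtimes (\SSS_3 \times \mumu_2)$ and the induced map to the Weyl group $\SSS_3 \times \mumu_2$. Since $\Pic(S'_{\bar\KK})$ has rank $4$ and the Galois group together with $G$ acts on it through the Weyl group, the condition $\rkPic(S')^G = 1$ forces the image of $G$ (combined with, or modulo, the Galois image) in $\SSS_3 \times \mumu_2$ to act on $\Pic(S'_{\bar\KK}) \otimes \QQ$ with a one-dimensional invariant subspace. One checks that a subgroup of $\SSS_3 \times \mumu_2$ has one-dimensional invariants on the degree $6$ Picard lattice precisely when it contains the central involution $\hat\sigma$ (the rotation by $180^\circ$); indeed the $\SSS_3$-part alone always fixes more, and only the full group or subgroups surjecting onto the $\mumu_2$ factor cut the invariants down to rank one. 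Hence $G$ contains an element $\theta$ of Cremona type, or at least $G$ together with the Galois action produces such a $\theta \in \Aut(S')$; I would take care to formulate this so that $\theta$ is defined over $\KK$.

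Now I would derive a contradiction from the existence of $\theta \in \Aut(S')$ of Cremona type. By Lemma~\ref{lemma:Cremona-type}, $\theta$ is an involution with exactly four fixed points on $S'_{\bar\KK}$. The fixed locus $\Fix_{S'_{\bar\KK}}(\theta)$ is thus a zero-dimensional scheme of length $4$ that is defined over $\KK$ (since $\theta$ is), so it is an effective zero-cycle of degree $4$ on $S'$. By Corollary~\ref{corollary:Lang-Nishimura}, $S$ then has a point of degree not divisible by $3$, contradicting Theorem~\ref{theorem:SB-point-degree}. The main obstacle I anticipate is the bookkeeping in the second paragraph: pinning down exactly which subgroups of $\SSS_3 \times \mumu_2$ give $\rkPic^G = 1$ and, more delicately, ensuring that the Cremona-type automorphism one extracts is genuinely an element of $\Aut(S')$ over $\KK$ rather than only over $\bar\KK$ — this requires combining the $G$-action with the Galois action carefully, possibly passing to the subgroup of $\Aut(S')$ generated by $G$ and arguing that its image in the Weyl group contains the center. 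Once that is secured, the fixed-point count plus Lang--Nishimura closes the argument immediately.
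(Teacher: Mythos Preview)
Your final step---once a Cremona-type automorphism $\theta\in\Aut(S')$ is in hand, its four geometric fixed points give a zero-cycle of degree~$4$ over~$\KK$, contradicting Theorem~\ref{theorem:SB-point-degree} via Corollary~\ref{corollary:Lang-Nishimura}---is correct and is exactly how the paper finishes. The gap is in how you produce~$\theta$.

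First, the opening paragraph is not needed: the hypothesis $\rkPic(S')^G=1$ already says that $S'$ is $G$-minimal as a del Pezzo surface, so there is nothing to run. More seriously, the claim that ``a subgroup of $\SSS_3\times\mumu_2$ has one-dimensional invariants on the degree~$6$ Picard lattice precisely when it contains $\hat\sigma$'' is false. The subgroups with rank-one invariants on $\Pic(S'_{\bar\KK})$ are $\mumu_6$, the full $\SSS_3\times\mumu_2$, and the ``other'' copy of $\SSS_3$ generated by a $3$-cycle together with an element of the form $(ij)\hat\sigma$; this last group does \emph{not} contain $\hat\sigma$. So from $\rkPic(S')^G=1$ alone you cannot conclude that the combined Galois--$G$ image in the Weyl group contains $\hat\sigma$, let alone that $G$ itself does.

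What you are missing is the input that pins down the Galois image. The paper first uses Corollaries~\ref{corollary:dP-large-Picard-rank} and~\ref{corollary:Weinstein} (which you do not invoke) to get $\rkPic(S')=2$; since there is no conic bundle structure, $S'$ is then a blow-up of a Severi--Brauer surface in a point of degree~$3$, so the Galois image in $\SSS_3\times\mumu_2$ contains an element of order~$3$. Now the cone of curves on $S'$ has exactly two extremal rays, and the condition $\rkPic(S')^G=1$ forces some $\theta\in G$ to swap them, hence to have image of order~$2$ in the Weyl group. Because $\theta\in\Aut(S')$ is defined over~$\KK$, its Weyl image commutes with the Galois image, and $\hat\sigma$ is the unique involution in $\SSS_3\times\mumu_2$ commuting with an element of order~$3$. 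This is what forces $\theta$ to be of Cremona type; without first establishing $\rkPic(S')=2$ you cannot reach this conclusion.
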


\begin{proof}
Suppose that $S'$ is birational to $S$.
We know from Corollary~\ref{corollary:dP-large-Picard-rank} that~\mbox{$\rkPic(S')\le 2$}.
Hence by Corollary~\ref{corollary:Weinstein} one has~\mbox{$\rkPic(S')=2$}.
Since $S'$ does not have a structure of a conic bundle by Corollary~\ref{corollary:many-dP}, we see that
$S'$ has a contraction on a del Pezzo surface of larger degree.
Again by Corollary~\ref{corollary:many-dP},
this means that $S'$ is a blow up of a Severi--Brauer surface at a point of degree~$3$.
Hence the image $\Gamma$ of the Galois group~\mbox{$\Gal(\bar{\KK}/\KK)$} in the Weyl group~\mbox{$\SSS_3\times\mumu_2$}
contains an element of order~$3$.

Since the cone of effective curves
on $S'$ has two extremal rays and $\rkPic(S')^G=1$, we see that $G$ must contain an element
whose image in the Weyl group has order~$2$. On the other hand,
the image of $G$ in the Weyl group commutes with~$\Gamma$. Since $\hat{\sigma}$
is the only element of order $2$ in~\mbox{$\SSS_3\times\mumu_2$}  that commutes with an element
of order~$3$, we conclude that $G$
must contain an element $\theta$
of Cremona type. The element~$\theta$ has exactly $4$ fixed points on $S'_{\bar{\KK}}$ by Lemma~\ref{lemma:Cremona-type}.
This implies that there is a point of degree not divisible by~$3$ on $S'$.
Thus the assertion follows from Corollary~\ref{corollary:Lang-Nishimura}  and Theorem~\ref{theorem:SB-point-degree}.
\end{proof}

Now we deal with del Pezzo surfaces of degree $3$, i.e. smooth cubic surfaces in $\PP^3$. Recall from that for a del Pezzo surface
$S'$ of degree $3$ over a field $\KK$ the action of the groups~\mbox{$\Aut(S')$} and $\Gal(\bar{\KK}/\KK)$ on $(-1)$-curves on $S'$ defines homomorphisms
of these groups to the Weyl group~$\WE$. Furthermore, the homomorphism
$\Aut(S')\to\WE$ is an embedding. The order of the Weyl group~$\WE$ equals~$2^7\cdot 3^4\cdot 5$.
We refer the reader to~\cite[Theorem~8.2.40]{Dolgachev}
and~\cite[Chapter~IV]{Manin} for details.

\begin{lemma}\label{lemma:dP3-not-3-group}
Let $S$ be a non-trivial Severi--Brauer surface over a field $\KK$ of characteristic zero, and let
$S'$ be a del Pezzo surface of degree $3$ over $\KK$. Suppose that there exists a non-trivial automorphism
$g\in\Aut(S')$ such that the order of $g$ is not a power of $3$. Then~$S'$ is not birational to $S$.
\end{lemma}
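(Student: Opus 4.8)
The strategy is to suppose that $S'$ is birational to $S$ and derive a contradiction, following the pattern of the proof of Lemma \ref{lemma:dP6-rk-1}: produce a point of degree not divisible by $3$ on $S'$ and invoke Corollary \ref{corollary:Lang-Nishimura} together with Theorem \ref{theorem:SB-point-degree}. The input is a non-trivial automorphism $g$ of $S'$ of order $m$ which is not a power of $3$; replacing $g$ by a suitable power we may assume $g$ has prime order $\ell \neq 3$ (or, if one prefers to keep the full cyclic group around, one works with the subgroup generated by $g$ and passes to a prime-order subquotient at the end). So the heart of the matter is: \emph{an automorphism of prime order $\ell\neq 3$ of a del Pezzo surface of degree $3$ has a fixed point over $\bar\KK$ whose Galois orbit has size prime to $3$.}

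First I would recall the constraints on $\rkPic(S')$. By Corollaries \ref{corollary:dP-large-Picard-rank} and \ref{corollary:Weinstein}, a del Pezzo surface of degree $3$ birational to a non-trivial Severi--Brauer surface has $\rkPic(S') \in \{2,3\}$; in particular $S'$ is not $\bar\KK$-rational-looking in the naive sense, but more importantly the image of $\Gal(\bar\KK/\KK)$ in $\WE$ is a nontrivial subgroup whose invariants on $\Pic(S'_{\bar\KK})$ have rank $2$ or $3$. I expect to need that this Galois image contains an element of order $3$ (as in the proof of Lemma \ref{lemma:dP6-rk-1}), which should follow from the fact that $S'$ can be obtained by blowing up a point of degree $3$ or $6$ on a Severi--Brauer surface, combined with Corollary \ref{corollary:many-dP}. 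Then the cyclic group $\langle g\rangle$, having order prime to $3$, embeds in $\WE$ and its image commutes with the Galois image (the $\KK$-automorphism group of $S'$ is the centralizer, inside $\WE$, of the Galois image).

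The core computation is to show that $g$ has a fixed point on $S'_{\bar\KK}$ whose residue field has degree prime to $3$ over $\KK$. The fixed locus $\Fix_{S'_{\bar\KK}}(g)$ is nonempty and is a disjoint union of a smooth curve and finitely many points (topological Euler characteristic of a cubic surface is $9$, which is prime to any $\ell\neq 3$; a standard holomorphic Lefschetz / Euler-characteristic argument shows $\Fix(g)$ is nonempty, and one can analyze its components via the classification of automorphisms of cubic surfaces, e.g. \cite[\S 8.2]{Dolgachev} or \cite[Chapter IV]{Manin}). I would argue that among the fixed components there is one — an isolated fixed point, or a fixed $(-1)$-curve, or a point on a fixed curve of small degree, or more robustly the zero-cycle cut out on a fixed curve by a Galois-invariant linear system — which carries a $\Gal(\bar\KK/\KK)$-orbit structure forcing a point of degree prime to $3$. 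The cleanest uniform route is probably: the $g$-fixed locus is defined over $\KK$ (since $\langle g\rangle$ is normalized, indeed centralized, by the Galois action); if it has an isolated point, intersecting the (finite, Galois-stable) set of isolated fixed points with its reduced structure gives a zero-cycle of degree equal to the number of isolated fixed points, which one checks is prime to $3$ case by case from the classification; if instead $\Fix(g)$ contains a curve $Z$ defined over $\KK$, then $Z$ has degree divisible by $3$ by Lemma \ref{lemma:SB-surface-curves}, but cutting $Z$ with a general anticanonical divisor (or with a general hyperplane in $\PP^3$) produces an effective $\KK$-rational zero-cycle on $Z$ of degree $-K_{S'}\cdot Z$, and for the possible fixed curves on a cubic surface this number is not divisible by $3$ — or, if it is, one bounds $Z^2$ and $K_{S'}\cdot Z$ against $\rkPic(S')\le 3$ to rule it out. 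Either way we land on a point of degree prime to $3$.

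The main obstacle will be the case analysis for which $(-1)$-curve configurations and fixed curves actually occur for an order-$\ell$ automorphism whose image in $\WE$ centralizes an order-$3$ element with the prescribed invariant rank; I would organize this by going through the conjugacy classes of elements of prime order $\neq 3$ in $\WE$ (there are elements of order $2$ and order $5$), computing for each the trace on $\Pic(S'_{\bar\KK})$ and hence the structure of $\Fix(g)$, and checking the divisibility-by-$3$ claim in each instance. I expect the order-$5$ case to be the easiest (it acts with isolated fixed points, a small number of them), and the involutions to require the most care because they can fix a curve; there the degree bound coming from $\rkPic(S')\le 3$ and Lemma \ref{lemma:SB-surface-curves} should close the gap. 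Once a point of degree prime to $3$ is produced on $S'$, Corollary \ref{corollary:Lang-Nishimura} transports it to $S$, contradicting Theorem \ref{theorem:SB-point-degree}.
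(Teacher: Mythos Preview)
Your strategy matches the paper's: reduce to prime order $\ell\in\{2,5\}$ (the only primes $\neq 3$ dividing $|\WE|$), examine $\Fix_{S'_{\bar{\KK}}}(g)$ via the classification of automorphisms of cubic surfaces, and extract a $\Gal(\bar{\KK}/\KK)$-invariant set of points of cardinality coprime to $3$. The paper does exactly this by citing \cite[Table~2]{Trepalin-cubic}: the three possible fixed loci are an elliptic curve plus one isolated point, a $(-1)$-curve plus three isolated points, or four isolated points; in each case the isolated points (or, in the second case, the observation that a Galois-invariant $(-1)$-curve is a line in $\PP^3$ and hence carries a $\KK$-point) supply the required zero-cycle.

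Two corrections to your plan. First, the entire discussion of $\rkPic(S')\in\{2,3\}$, the Galois image containing an order-$3$ element, and centralizers in $\WE$ is superfluous: the paper's argument uses no hypothesis on the Picard rank or on how Galois acts on the $27$ lines, only that $g$ is defined over $\KK$ so that its fixed locus is Galois-stable. Second, your appeal to Lemma~\ref{lemma:SB-surface-curves} is a genuine misstep: that lemma concerns curves on the Severi--Brauer surface $S$ itself and says nothing about curves on the cubic surface $S'$, and there is no evident way to transport a fixed curve from $S'$ to $S$. Your alternative idea of intersecting a fixed curve $Z$ with $-K_{S'}$ does handle the $(-1)$-curve case (since $-K_{S'}\cdot Z=1$), but note that for the elliptic fixed curve one has $-K_{S'}\cdot Z=3$, so there you must fall back on the single isolated point; the vague Picard-rank bound you offer as a backstop is neither needed nor clearly sufficient.
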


\begin{proof}
We may assume that the order $p=\mathrm{ord}(g)$ is prime. Thus one has $p=2$ or~\mbox{$p=5$}, since the order of the Weyl group
$\WE$ is not divisible by primes greater than~$5$.
The action of $g$ on $S'_{\bar{\KK}}$ can be of one of the three types listed in
\cite[Table~2]{Trepalin-cubic}; in the notation of \cite[Table~2]{Trepalin-cubic} these are
types~1, 2, and~6. It is straightforward to check
that if $g$ is of type $1$, then the fixed point locus
$\Fix_{S'_{\bar{\KK}}}(g)$ consists of a smooth elliptic curve and one isolated point;
if $g$ is of type $2$, then $\Fix_{S'_{\bar{\KK}}}(g)$ consists of a $(-1)$-curve and three isolated points;
if $g$ is of type $6$, then $\Fix_{S'_{\bar{\KK}}}(g)$ consists of a four isolated points. Note that a $\Gal(\bar{\KK}/\KK)$-invariant
$(-1)$-curve on $S'$ always contains a $\KK$-point, since such a curve is a line
in the anticanonical embedding of $S'$. Therefore, in each of the above three cases we
find a $\Gal(\bar{\KK}/\KK)$-invariant set of points on $S'_{\bar{\KK}}$ of cardinality coprime to~$3$.
Thus the assertion follows from Corollary~\ref{corollary:Lang-Nishimura}  and Theorem~\ref{theorem:SB-point-degree}.
\end{proof}

\begin{corollary}\label{corollary:dP3-rkPic-3}
Let $S$ be a non-trivial Severi--Brauer surface over a field $\KK$ of characteristic zero, and let
$S'$ be a del Pezzo surface of degree $3$ over $\KK$ birational to $S$.
Suppose that there exists a subgroup $G\subset\Aut(S')$ such that
$\rkPic(S')^G=1$. Then~$\rkPic(S')=3$.
\end{corollary}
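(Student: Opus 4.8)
The plan is to bound $\rkPic(S')$ from both sides. By Corollary~\ref{corollary:dP-large-Picard-rank}, since $S'$ is a del Pezzo surface of degree $3$ birational to a non-trivial Severi--Brauer surface, its Picard rank satisfies $\rkPic(S')\le 3$; moreover, combined with Corollary~\ref{corollary:Weinstein}, we already know $\rkPic(S')\in\{2,3\}$. So the only thing to rule out is $\rkPic(S')=2$ under the hypothesis that $\rkPic(S')^G=1$ for some finite $G\subset\Aut(S')$.

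First I would assume for contradiction that $\rkPic(S')=2$. Then the geometric Picard lattice, which has rank $7$, is cut down to rank $2$ by the Galois action, and further down to rank $1$ by the combined action of $G$ and $\Gal(\bar{\KK}/\KK)$ on the $G$-invariant part. The cone of effective curves on $S'$ has exactly two extremal rays (as $\rkPic(S')=2$), so $S'$ admits two elementary contractions. Since $S'$ is not birational to a conic bundle (Corollary~\ref{corollary:many-dP}), neither contraction can be a conic bundle structure, so each must be a birational contraction to a del Pezzo surface of larger degree. By Corollary~\ref{corollary:many-dP} again, the only degree available above $3$ and not birational-forbidden is $6$ (degree $9$ would require $\rkPic=1$), so $S'$ is obtained by blowing down to a del Pezzo surface of degree $6$; that is, $S'$ is the blow-up of a non-trivial Severi--Brauer surface (necessarily $S$ or $S^{\op}$, by Remark~\ref{remark:Amitsur} together with Corollary~\ref{corollary:dP-large-Picard-rank}) at a Galois orbit of size $3$, or more precisely at a point of degree $3$.

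Now, since $\rkPic(S')^G=1$ while $\rkPic(S')=2$, the group $G$ cannot preserve either extremal ray of $\overline{\mathrm{NE}}(S')$, so $G$ contains an element $g$ that swaps the two extremal rays. Such a $g$ acts nontrivially on $\Pic(S')$, hence has order divisible by $2$ (it induces a transposition of the two contraction classes). By Lemma~\ref{lemma:dP3-not-3-group}, an automorphism of $S'$ whose order is not a power of $3$ forces $S'$ not to be birational to $S$ — contradiction. Therefore $\rkPic(S')=2$ is impossible, and we conclude $\rkPic(S')=3$.

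The main obstacle I anticipate is the step showing that the ray-swapping element $g$ genuinely has even order (equivalently, that it acts on $\Pic(S')$ by an element of the Weyl group $\WE$ whose order is not a power of $3$): a priori $g$ could permute the two rays while still having $3$-power order if it acted trivially on them and the swap came from the Galois twisting rather than from $g$ itself. I would address this by working with the $G$-action on the two-dimensional lattice $\Pic(S')$ directly: an order-$3$ element acting on a rank-$2$ lattice and fixing no rank-$1$ sublattice would have characteristic polynomial $t^2+t+1$, but then it would act without real eigenvectors and in particular could not swap two distinct effective rays (a swap is an involution on the set of two rays). Hence the element realizing the swap must have even order, and Lemma~\ref{lemma:dP3-not-3-group} applies. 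An alternative, cleaner route avoiding this subtlety: if $\rkPic(S')=2$ then as noted $S'$ is a blow-up of a Severi--Brauer surface at a degree-$3$ point, so the image of $\Gal(\bar{\KK}/\KK)$ in $\WE$ contains an element of order $3$ cycling the three exceptional curves; arguing as in the proof of Lemma~\ref{lemma:dP6-rk-1}, the condition $\rkPic(S')^G=1$ then forces $G$ to contain an element whose image in $\WE$ has even order and commutes with this order-$3$ element, and Lemma~\ref{lemma:dP3-not-3-group} gives the contradiction. I would use whichever of these two arguments the author's earlier lemmas make most immediate.
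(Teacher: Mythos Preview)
Your approach is essentially the same as the paper's: reduce to $\rkPic(S')\in\{2,3\}$ via Corollaries~\ref{corollary:dP-large-Picard-rank} and~\ref{corollary:Weinstein}, then rule out rank~$2$ by producing an element of even order in~$G$ and invoking Lemma~\ref{lemma:dP3-not-3-group}. The paper's proof is exactly this, in three sentences.

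Two comments on unnecessary detours. First, the discussion of what the two contractions look like (to a del Pezzo of degree~$6$, blow-up of a Severi--Brauer surface at a degree-$3$ point, etc.) is irrelevant here; you never use it. Second, your worry that the ray-swapping element might have $3$-power order is unfounded and stems from a confusion: the action in question is that of~$G$ on the arithmetic Picard group $\Pic(S')$, which is already Galois-invariant, so there is no ``Galois twisting'' separate from the $G$-action at this stage. Any $g\in G$ that swaps the two extremal rays acts on the rank-$2$ lattice $\Pic(S')$ with order exactly~$2$ (the rays span $\Pic(S')\otimes\QQ$, and $g^2$ fixes both), hence $g$ itself has even order. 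This is why the paper simply asserts ``the group $G$ must contain an element of even order'' without further comment. Your characteristic-polynomial argument and the alternative route via centralizers in~$\WE$ are both correct but superfluous.
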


\begin{proof}
We know from Corollaries~\ref{corollary:dP-large-Picard-rank} and
\ref{corollary:Weinstein} that
either $\rkPic(S')=2$, or $\rkPic(S')=3$.
Suppose that $\rkPic(S')=2$, so that the cone of effective curves
on $S'$ has two extremal rays. Since $\rkPic(S')^G=1$, the group $G$ must contain an element of even order.
Therefore, the assertion follows from Lemma~\ref{lemma:dP3-not-3-group}.
\end{proof}

\begin{corollary}\label{corollary:dP3-3-group}
Let $S$ be a non-trivial Severi--Brauer surface over a field $\KK$ of characteristic zero, and let
$S'$ be a del Pezzo surface of degree $3$ over $\KK$ birational to $S$.
Let~\mbox{$G\subset\Aut(S')$} be a subgroup such that $\rkPic(S')^G=1$. Then $G$ is isomorphic to
a subgroup of~$\mumu_3^3$.
\end{corollary}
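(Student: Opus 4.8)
The plan is to combine the preceding corollary with a group-theoretic analysis inside the Weyl group $\WE$. By Corollary \ref{corollary:dP3-rkPic-3} we know that $\rkPic(S')=3$, so $S'$ is obtained from a non-trivial Severi--Brauer surface by blowing up a point of degree $6$, or equivalently $S'_{\bar\KK}$ is the blow-up of $\PP^2_{\bar\KK}$ at six points with a specific Galois action; in any case $\rkPic(S'_{\bar\KK})=7$. First I would record that $G$ embeds into $\WE$ via its action on the $27$ lines, and that by Lemma \ref{lemma:dP3-not-3-group} every non-trivial element of $G$ has order a power of $3$; since $\WE$ has order $2^7\cdot 3^4\cdot 5$, the group $G$ is a $3$-group of order dividing $81$. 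So it remains to pin down which $3$-subgroups of $\WE$ can occur, using the extra constraint $\rkPic(S')^G=1$, i.e. $\Pic(S'_{\bar\KK})^G$ has rank $1$, spanned by $K_{S'}$.

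The key step is to show that such a $G$ is elementary abelian of rank at most $3$. I would argue as follows. A Sylow $3$-subgroup of $\WE$ has order $81$; it is known (and I would cite or briefly verify via the structure $\WE\cong\mathrm{O}_6^-(2)$, or via Manin's description in \cite[Chapter~IV]{Manin}) that a Sylow $3$-subgroup of $\WE$ is isomorphic to a non-abelian group of order $81$, specifically one with a cyclic subgroup of order $9$ — equivalently, $\WE$ contains elements of order $9$. An element $w\in\WE$ of order $9$, acting on the rank-$7$ lattice $\Pic(S'_{\bar\KK})$, has characteristic polynomial divisible by the ninth cyclotomic polynomial $\Phi_9$, which has degree $6$; hence $w$ fixes only a rank-$1$ sublattice, so a single such element already gives $\rkPic(S')^{\langle w\rangle}=1$. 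However, I must rule this out: an order-$9$ automorphism of a cubic surface is not realized, because by Lemma \ref{lemma:dP3-not-3-group} its cube is an order-$3$ automorphism whose fixed locus I can analyze, and more directly, inspecting the list of possible automorphisms of cubic surfaces (e.g. \cite[Table~2]{Trepalin-cubic} or \cite[Theorem~8.2.40]{Dolgachev}) shows the only elements of $3$-power order have order $3$. Therefore $G$ has exponent $3$, so $G$ is elementary abelian; a $3$-group of exponent $3$ inside $\WE$ has order dividing $3^3$ since the Sylow $3$-subgroup of $\WE$, while of order $81$, has no elementary abelian subgroup of rank $4$ (its maximal elementary abelian subgroups have rank $3$ — this can be read off from the action on the $27$ lines, or from the fact that $(\ZZ/3)^4$ does not embed in $\mathrm{O}_6^-(2)$). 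Hence $G\hookrightarrow\mumu_3^3$.

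The main obstacle I expect is the purely group-theoretic bookkeeping inside $\WE$: namely verifying cleanly that (a) $\WE$ has no automorphism-realizable element of order $9$ on a cubic surface, and (b) every elementary abelian $3$-subgroup of $\WE$ has rank at most $3$. Both facts are classical, but I would want to present (a) via the geometric fixed-point analysis already set up in the proof of Lemma \ref{lemma:dP3-not-3-group} (an order-$9$ element would have a non-trivial cube of type $1$, $2$, or $6$, and one checks the centralizer of such an order-$3$ element in $\WE$ contains no element of order $9$ acting compatibly), and (b) either by citing Dolgachev's classification of finite automorphism groups of cubic surfaces — where the maximal $3$-groups appearing are $\mumu_3^3$ and $\mumu_3\times\mumu_3\rtimes\cdots$, none elementary abelian of rank $4$ — or by a direct argument with the $E_6$ root lattice, noting that a rank-$4$ elementary abelian $3$-group would force its fixed sublattice in the rank-$6$ root lattice to be trivial while also requiring too many independent order-$3$ "reflection-type" elements to commute. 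Once these two structural facts are in hand, the corollary follows immediately from Corollary \ref{corollary:dP3-rkPic-3} together with Lemma \ref{lemma:dP3-not-3-group}.
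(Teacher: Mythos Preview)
Your argument has two genuine gaps, both stemming from not using the Galois constraint that the paper exploits.

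First, your claim that the only $3$-power-order automorphisms of smooth cubic surfaces have order $3$ is false: cubic surfaces do admit automorphisms of order $9$ (Carter class $E_6(a_1)$ in $\WE$; this appears already in the automorphism classification you cite). So you cannot rule out an order-$9$ element of $G$ merely by inspecting the list of cubic-surface automorphisms, and your fallback argument about the cube being ``of type $1$, $2$, or $6$'' is confused---those types in \cite[Table~2]{Trepalin-cubic} refer to elements of order $2$ and $5$, not $3$.

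Second, even granting exponent $3$, the inference ``$G$ has exponent $3$, so $G$ is elementary abelian'' fails: the Heisenberg group $\HHH_3$ of order $27$ has exponent $3$, is non-abelian, and does embed in $\WE$ (for instance inside the Sylow $3$-subgroup $\mumu_3\wr\mumu_3$ acting on the Fermat cubic, via the augmentation-zero part of $\mumu_3^3$ together with a $3$-cycle). So you have not excluded $G\cong\HHH_3$ either.

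The missing idea is precisely the Galois input. From $\rkPic(S')=3$ one deduces that $S'$ is the blow-up of a Severi--Brauer surface at \emph{two} points of degree~$3$ (not one point of degree~$6$ as you wrote---that would give $\rkPic(S')=2$). Hence the image of $\Gal(\bar\KK/\KK)$ in $\WE$ contains an element conjugate to $\gamma=(123)(456)$. Since $G$ is defined over $\KK$, its image in $\WE$ commutes with the Galois image and therefore lies in the centralizer $Z(\gamma)\cong(\mumu_3^2\rtimes\mumu_2)\times\SSS_3$, whose Sylow $3$-subgroup is $\mumu_3^3$. Combined with Lemma~\ref{lemma:dP3-not-3-group}, this immediately gives $G\hookrightarrow\mumu_3^3$, simultaneously excluding order-$9$ elements and non-abelian $3$-groups. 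A purely group-theoretic analysis inside the full $\WE$, without restricting to $Z(\gamma)$, cannot reach this conclusion.
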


\begin{proof}
We know from Corollary~\ref{corollary:dP3-rkPic-3} that $\rkPic(S')=3$.
Hence $S'$ is a blow up of a Severi--Brauer surface at two points of degree $3$.
This means that the image $\Gamma$ of the Galois group~\mbox{$\Gal(\bar{\KK}/\KK)$} in
the Weyl group $\WE$ contains an element conjugate to
$$
\gamma=(123)(456)
$$
in the notation of~\cite[\S4]{Trepalin-cubic}.
We may assume that $\Gamma$ contains $\gamma$ itself, so that
the image of~$G$ in $\WE$ is contained in the centralizer $Z(\gamma)$
of $\gamma$. By~\mbox{\cite[Proposition~4.5]{Trepalin-cubic}}
one has
$$
Z(\gamma)\cong(\mumu_3^2\rtimes\mumu_2)\times \SSS_3.
$$
On the other hand, we know from Lemma~\ref{lemma:dP3-not-3-group} that the order of $G$ is a power of $3$.
Since the Sylow $3$-subgroup of $Z(\gamma)$ is isomorphic to $\mumu_3^3$, the required
assertion follows.
\end{proof}

\begin{remark}
For an alternative proof of Corollary~\ref{corollary:dP3-3-group}, suppose that $G$ is a non-abelian $3$-group.
One can notice that
in this case the image of $\Gal(\bar{\KK}/\KK)$ in $\WE$ is
either trivial, or is generated by an element of type~$\mathrm{A}_2^3$ in the notation of
\cite{Carter}. In the former case $\rkPic(S')=7$, which is impossible by
Corollary~\ref{corollary:dP-large-Picard-rank}.
In the latter case $\rkPic(S')=1$, which is impossible by Corollary~\ref{corollary:Weinstein}.
Thus, $G$ is an abelian $3$-group. The rest is more or less straightforward, cf.~\mbox{\cite[Theorem~6.14]{DI}}.
\end{remark}

Let us summarize the results of this section.

\begin{proposition}\label{proposition:summary}
Let $S$ be a non-trivial Severi--Brauer surface over a field $\KK$ of characteristic zero, and let
$G\subset\Bir(S)$ be a finite subgroup.
Then $G$ is conjugate either to a subgroup of
$\Aut(S)$, or to a subgroup of
$\Aut(S^{op})$, or to a subgroup of
$\Aut(S')$, where $S'$ is a del Pezzo surface of degree $3$ over $\KK$ birational to $S$
such that~\mbox{$\rkPic(S')=3$} and~\mbox{$\rkPic(S')^G=1$}.
In the latter case $G$ is isomorphic to
a subgroup of~$\mumu_3^3$.
\end{proposition}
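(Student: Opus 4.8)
The plan is to combine the $G$-equivariant Minimal Model Program for surfaces with the structural results accumulated in Sections~\ref{section:birational} and~\ref{section:G-birational}. Given a finite subgroup $G\subset\Bir(S)$, I would first regularize the action: by a standard argument (blowing up and resolving, then running the $G$-MMP) there is a smooth projective surface $S'$ over $\KK$ with a biregular action of $G$, a $G$-equivariant birational map $S'\dasharrow S$, and such that $S'$ is $G$-minimal, i.e.\ $\rkPic(S')^G=1$ and either $S'$ admits a $G$-equivariant conic bundle structure or $S'$ is a del Pezzo surface with $\rkPic(S')^G=1$. Since $S$ has a point of degree prime to~$3$ after no blow-up (it has none), and more to the point $S'$ is birational to~$S$, Corollary~\ref{corollary:Lang-Nishimura} and Theorem~\ref{theorem:SB-point-degree} forbid many of these outcomes.

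Next I would eliminate the unwanted cases one by one. The conic bundle case is excluded by Corollary~\ref{corollary:many-dP}. If $S'$ is del Pezzo, its degree lies in $\{1,\dots,9\}$; degrees not in $\{3,6,9\}$ are excluded by Corollary~\ref{corollary:many-dP}. Degree~$9$ means $S'_{\bar\KK}\cong\PP^2$, so $S'$ is a Severi--Brauer surface birational to $S$, hence $S'\cong S$ or $S'\cong S^{\op}$ by Remark~\ref{remark:Amitsur} (or Theorem~\ref{theorem:Weinstein}), giving the first two alternatives in the statement. Degree~$6$ is excluded outright by Lemma~\ref{lemma:dP6-rk-1}: a $G$-minimal del Pezzo surface of degree~$6$ birational to $S$ cannot exist. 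This leaves degree~$3$, and then Corollary~\ref{corollary:dP3-rkPic-3} forces $\rkPic(S')=3$, while Corollary~\ref{corollary:dP3-3-group} identifies $G$ as (isomorphic to) a subgroup of $\mumu_3^3$. Assembling these gives exactly the trichotomy asserted.

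One technical point deserving care is that the $G$-MMP and the cited structural lemmas are phrased over a perfect field (indeed, most of Section~\ref{section:birational} assumes $\KK$ perfect), whereas the statement only assumes characteristic zero; but characteristic zero implies perfect, so there is no gap. A second point is the transition from ``$G\subset\Bir(S)$ finite'' to ``$G$ acts biregularly on a smooth projective $G$-minimal $S'$'': this is the classical fact that a finite group of birational transformations of a smooth projective surface can be regularized (e.g.\ via a $G$-equivariant resolution of the graph) and then the $G$-equivariant MMP run; I would invoke this as standard, citing the equivariant MMP for surfaces. Finally, one must make sure the birational map produced is compatible with the birational equivalence to $S$, so that all the corollaries of Section~\ref{section:birational}, which are about surfaces \emph{birational to} $S$, apply to $S'$ --- this is automatic since $S'\dasharrow S$ is birational.

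The main obstacle, such as it is, is bookkeeping rather than a deep difficulty: one has to be confident that the list of $G$-minimal outcomes of the surface MMP is exactly conic bundles plus del Pezzo surfaces with $\rkPic^G=1$, and that every del Pezzo degree other than those handled above is genuinely killed by the cited results --- in particular that degree~$6$ with $\rkPic^G=1$ is impossible (Lemma~\ref{lemma:dP6-rk-1}) and that in degree~$3$ one really does land in $\rkPic=3$ rather than $\rkPic=1$ or~$2$ (Corollaries~\ref{corollary:dP3-rkPic-3} and~\ref{corollary:dP3-3-group}). Since all of these have been established in the preceding sections, the proof of Proposition~\ref{proposition:summary} is essentially an assembly of them, and I would write it as a short case analysis following the MMP dichotomy.
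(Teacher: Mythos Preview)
Your proposal is correct and follows essentially the same approach as the paper's own proof: regularize the action, run the $G$-MMP, and eliminate all outcomes except del Pezzo surfaces of degree~$9$ and~$3$ using exactly the same results (Corollary~\ref{corollary:many-dP}, Lemma~\ref{lemma:dP6-rk-1}, Remark~\ref{remark:Amitsur}, Corollaries~\ref{corollary:dP3-rkPic-3} and~\ref{corollary:dP3-3-group}). One small slip: in the conic bundle outcome of the $G$-MMP one has $\rkPic(S')^G=2$, not~$1$, but since that case is immediately excluded by Corollary~\ref{corollary:many-dP} this does not affect the argument.
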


\begin{proof}
Regularizing the action of $G$ and running a $G$-Minimal Model Program (see~\mbox{\cite[Theorem~1G]{Iskovskikh80}}),
we obtain a $G$-surface
$S'$ birational to $S$, such that~$S'$ is either a del Pezzo surface with
$\rkPic(S')^G=1$, or a conic bundle. The case of a conic bundle is impossible by Corollary~\ref{corollary:many-dP}.
Thus by Corollary~\ref{corollary:many-dP} and Lemma~\ref{lemma:dP6-rk-1}
we conclude that $S'$ is a del Pezzo surface of degree $9$ or $3$.
In the former case $S'$ is a Severi--Brauer surface itself, so
that $S'$ is isomorphic either to $S$ or to $S^{\op}$ by Remark~\ref{remark:Amitsur}.
In the latter case
we have $\rkPic(S')=3$ by Corollary~\ref{corollary:dP3-rkPic-3}. Furthermore, in this case~$G$ is isomorphic to
a subgroup of~$\mumu_3^3$ by
Corollary~\ref{corollary:dP3-3-group}.
\end{proof}

I do not know the answer to the following question.

\begin{question}
Does there exist an example of a finite abelian group $G$
acting on a smooth cubic surface~$S'$ over a field of characteristic zero,
such that $S'$ is birational to a non-trivial
Severi--Brauer surface and~\mbox{$\rkPic(S')^G=1$}?
In other words, does there exist a non-trivial Severi--Brauer surface
$S$ over a field of characteristic zero and a finite abelian
group~\mbox{$G\subset\Bir(S)$}, such that
the action of $G$ can be regularized on
some smooth cubic surface, but
$G$ is not conjugate to a subgroup of~\mbox{$\Aut(S)$}?
\end{question}

\section{Automorphisms of Severi--Brauer surfaces}
\label{section:bounds}

In this section we prove
Proposition~\ref{proposition:SB-Bir-vs-Aut}
and Theorem~\ref{theorem:main}.
We start with a couple of simple auxiliary results.

Let $\HHH_3$ denote the Heisenberg group of order $27$; this is the only non-abelian
group of order $27$ and exponent $3$. Its center $\z(\HHH_3)$ is isomorphic to $\mumu_3$,
and there is a non-split exact sequence
$$
1\to\z(\HHH_3)\to\HHH_3\to\mumu_3^2\to 1.
$$
On the other hand, one can also represent $\HHH_3$ as a semi-direct product $\HHH_3\cong\mumu_3^2\rtimes\mumu_3$.

\begin{lemma}\label{lemma:P2}
Let $\bar{\KK}$ be an algebraically closed field of characteristic zero,
and let
$$
G\subset\Aut(\PP^2)\cong\PGL_3(\bar{\KK})
$$
be a finite subgroup.
The following assertions hold.
\begin{itemize}
\item[(i)] If the order of $G$ is odd, then $G$ is either abelian, or contains a normal abelian subgroup
of index~$3$.

\item[(ii)] If the order of $G$ is odd and $G$ is non-trivial,
then $G$ contains a subgroup $H$ such that either~\mbox{$|\Fix_{\PP^2}(H)|=3$},
or $\Fix_{\PP^2}(H)$ has a unique isolated point.
Moreover, one can choose $H$ with such a property so that either $H=G$,
or $H$ is a normal subgroup of index $3$ in~$G$.

\item[(iii)] If $G\cong\HHH_3$, then $G$ contains an element $g$ such that
$\Fix_{\PP^2}(g)$ has a unique isolated point.

\item[(iv)] The group $G$ is not isomorphic to $\mumu_3^3$.
\end{itemize}
\end{lemma}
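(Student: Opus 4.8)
The plan is to reduce all four assertions to the classical classification of finite subgroups of $\PGL_3(\bar{\KK})$, according to which every such subgroup is conjugate either to a subgroup of the normalizer $N=T\rtimes\SSS_3$ of a maximal torus $T\cong(\bar{\KK}^*)^2$, or to one of finitely many \emph{primitive} groups, all of which have even order. Hence any odd-order finite $G\subset\PGL_3(\bar{\KK})$ is conjugate into $N$, and I would fix such a conjugate for parts (i) and (ii) (in the reducible case one uses that an odd-order finite subgroup of $\PGL_2(\bar{\KK})$ is cyclic, so $G$ is in fact diagonalizable). The projection $N\to\SSS_3$ maps $G$ onto a subgroup of odd order, i.e.\ onto the trivial group or onto the subgroup $\mumu_3\subset\SSS_3$ of order $3$; its kernel $A:=G\cap T$ is a diagonalizable abelian group, and $[G:A]\in\{1,3\}$. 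This is exactly (i).

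For (ii) I would put $H:=A$ when $A\neq 1$ and $H:=G$ otherwise. In the first case $H$ is normal in $G$ of index $1$ or $3$; in the second $G$ maps isomorphically onto $\mumu_3\subset\SSS_3$, so $G\cong\mumu_3$. It then remains to describe $\Fix_{\PP^2}(H)$. If $H=A\subset T$ is non-trivial, I decompose $\bar{\KK}^3$ into the weight spaces of $H$; faithfulness of the $\PGL_3$-action forces the weights not to be all equal, so either there are three distinct one-dimensional weight spaces, and $\Fix_{\PP^2}(H)$ is the corresponding set of three points, or there is one two-dimensional and one one-dimensional weight space, and $\Fix_{\PP^2}(H)$ is a line plus a single isolated point. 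If $H=G\cong\mumu_3$ is generated by an element represented by a monomial matrix whose permutation part is a $3$-cycle, the cube of that matrix is scalar, so it has three distinct eigenvalues and $\Fix_{\PP^2}(H)$ consists of three points. Either way the assertion holds.

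For (iii) I would take the preimage $\widetilde{G}\subset\SL_3(\bar{\KK})$ of $G\cong\HHH_3$, so that $|\widetilde{G}|=81$. Since a $3$-group in characteristic zero has no irreducible $2$-dimensional representation, a reducible action of $\widetilde{G}$ on $\bar{\KK}^3$ would split into characters, making $\widetilde{G}$ abelian — impossible, as $\HHH_3$ is a quotient of $\widetilde{G}$. So $\widetilde{G}$ acts irreducibly. The preimage $\widetilde{Z}\subset\widetilde{G}$ of $\z(G)\cong\mumu_3$ has order $9$, hence is abelian, and it is normal in $\widetilde{G}$. Decomposing $\bar{\KK}^3$ into $\widetilde{Z}$-weight spaces: a single weight would force $\widetilde{Z}$ into the group of scalar matrices of $\SL_3$, which has order $3$; a $2+1$ splitting would be $\widetilde{G}$-stable, contradicting irreducibility. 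So there are three distinct one-dimensional $\widetilde{Z}$-weight spaces, and in the associated basis $\widetilde{G}$ — hence $G$ — consists of monomial matrices. I may thus assume $G\subset N$; the kernel $G\cap T$ of $G\to\SSS_3$ then has order $9$ (the image is transitive on the three weight lines, hence equals $\mumu_3$), and being abelian of exponent $3$ it coincides with the full $3$-torsion subgroup of $T$. In particular $G$ contains a diagonal quasi-reflection $g$, and $\Fix_{\PP^2}(g)$ is a line together with a single isolated point.

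For (iv) I would again pass to the preimage $\widetilde{G}\subset\SL_3(\bar{\KK})$ of $\mumu_3^3$, of order $81$. A reducible action would, as above, make $\widetilde{G}$ abelian; since $\mumu_3^3$ is a quotient, $\widetilde{G}$ would then have $3$-rank at least $3$, impossible inside the rank-$2$ maximal torus of $\SL_3$. So $\widetilde{G}$ acts irreducibly, and by Schur's lemma $\z(\widetilde{G})$ consists of scalar matrices. The scalar matrices of $\SL_3$ form $\mumu_3$ and lie in $\widetilde{G}$, so $\z(\widetilde{G})=\mumu_3\cdot I$ and $\widetilde{G}/\z(\widetilde{G})\cong\mumu_3^3$; in particular $[\widetilde{G},\widetilde{G}]\subset\z(\widetilde{G})$, so the commutator map descends to a non-degenerate alternating bilinear form on the three-dimensional $\mathbb{F}_3$-vector space $\widetilde{G}/\z(\widetilde{G})$. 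Such a form cannot exist, since an alternating form on an odd-dimensional space is degenerate; this contradiction proves (iv). The step I expect to demand the most care is the reduction in (iii): ruling out that the central subgroup $\widetilde{Z}$ acts with fewer than three weights is exactly what forces $G$ into monomial form and produces the quasi-reflection. For (i) and (ii) the only substantive input is the classification of finite subgroups of $\PGL_3(\bar{\KK})$, the rest being bookkeeping about weight spaces and $3$-cycles.
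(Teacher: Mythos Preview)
Your proof is correct. For parts (i) and (ii) it follows essentially the same route as the paper's argument: both use the classification of finite subgroups of $\PGL_3(\bar{\KK})$ to see that an odd-order group is non-primitive, hence (after handling the reducible case) monomial, and then read off the abelian normal subgroup and the shape of the fixed locus. Your treatment is somewhat more explicit --- you work directly inside $N=T\rtimes\SSS_3$ and check the $3$-cycle case by hand, whereas the paper passes through the $\SL_3$-lift $\tilde G$ and simply invokes the dichotomy ``$\tilde G$ abelian, or $\tilde G$ has an abelian kernel of index~$3$'' coming from the classification --- but the substance is the same.

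Where you genuinely diverge is in (iii) and (iv). The paper simply cites the classification: for (iii) it records that $\tilde G\cong\mumu_3^3\rtimes\mumu_3$ with $\tilde H\cong\mumu_3^3$ diagonalizable, so the image $H\cong\mumu_3^2$ in $\PGL_3$ contains a quasi-reflection; for (iv) it just says ``directly follows from the classification''. You instead give self-contained representation-theoretic arguments. In (iii) you show irreducibility of the $\SL_3$-lift from the absence of $2$-dimensional irreducibles for $3$-groups, and then force the monomial structure by analysing the weight decomposition of the order-$9$ abelian normal subgroup~$\tilde Z$; identifying $G\cap T$ with the full $3$-torsion of $T$ then yields the quasi-reflection. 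In (iv) your argument via the commutator pairing is elegant: once $\z(\tilde G)=\mumu_3\cdot I$ by Schur, the induced alternating form on $\tilde G/\z(\tilde G)\cong\mumu_3^3$ would have to be non-degenerate, which is impossible on an odd-dimensional $\mathbb{F}_3$-space. These arguments avoid invoking the full Blichfeldt list and would work verbatim for the analogous statements with $3$ replaced by any prime~$p$; the trade-off is that they are longer than the paper's one-line citations.
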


\begin{proof}
Let $\tilde{G}$ be the preimage of $G$ under the natural projection
$$
\pi\colon\SL_3(\bar{\KK})\to\PGL_3(\bar{\KK}),
$$
and let $V\cong\bar{\KK}^3$ be the corresponding three-dimensional representation of $\tilde{G}$.
We will use  the classification of finite subgroups
of $\PGL_3(\bar{\KK})$, see for instance~\cite[Chapter~V]{Blichfeldt}.

Suppose that the order of $G$ is odd. Then it follows from the classification that either~$\tilde{G}$ is abelian, so that $V$ splits as a sum of three one-dimensional
$\tilde{G}$-representations such that not all of them are isomorphic to each other; or $\tilde{G}$ is non-abelian and there exists a
surjective homomorphism $\tilde{G}\to\mumu_3$ whose kernel $\tilde{H}$ is an abelian group,
so that $V$ splits as a sum of three one-dimensional
$\tilde{H}$-representations, and $\tilde{G}/\tilde{H}\cong\mumu_3$ transitively permutes these
$\tilde{H}$-representations. In other words, the group $G$ cannot be primitive (see~\mbox{\cite[\S60]{Blichfeldt}}
for the terminology), and $V$ cannot split as a sum of a one-dimensional and an irreducible two-dimensional
$\tilde{G}$-representation.
This proves assertion~(i).

If $\tilde{G}$ is abelian, let $\tilde{H}=\tilde{G}$. Thus,
if $|G|$ is odd and $G$ is non-trivial,
in all possible cases we see that the group
$H=\pi(\tilde{H})\subset\PGL_3(\bar{\KK})$ is a group with the properties required
in assertion~(ii).

Suppose that $G\cong\HHH_3$.
Then it follows from the classification (cf.~\cite[6.4]{Borel}) that
$$
\tilde{G}\cong\mumu_3^3\rtimes\mumu_3,
$$
and $\tilde{H}\cong\mumu_3^3$.
The elements of $\tilde{H}$ can be simultaneously diagonalized.
Hence the image~\mbox{$H\cong\mumu_3^2$} of $\tilde{H}$ in $\PGL_3(\bar{\KK})$
contains an element $g$
such that $\Fix_{\PP^2}(h)$ consists of a line and an isolated point.
This proves assertion~(iii).

Assertion~(iv) directly follows from the classification (cf.~\cite[6.4]{Borel}).
\end{proof}

Most of our remaining arguments are based on the following observation.

\begin{lemma}
\label{lemma:SB-Aut}
Let $S$ be a non-trivial Severi--Brauer surface over a field of characteristic zero, and
let $G\subset\Aut(S)$ be a finite subgroup. Then the order of $G$ is odd.
\end{lemma}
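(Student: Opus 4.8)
The plan is to reduce to a statement over the algebraic closure together with a Galois-descent obstruction, and then play off the structure of the finite subgroup against the constraint coming from Theorem~\ref{theorem:SB-point-degree}. Suppose for contradiction that $G\subset\Aut(S)$ has even order; then $G$ contains an element $g$ of order $2$. Over $\bar{\KK}$ we have $S_{\bar{\KK}}\cong\PP^2_{\bar{\KK}}$, and $\Aut(S)$ embeds into $\Aut(S_{\bar{\KK}})\cong\PGL_3(\bar{\KK})$ in a way compatible with the Galois action; concretely, $\Aut(S)$ is the subgroup of $\PGL_3(\bar{\KK})$-elements commuting with the (twisted) Galois action defining $S$. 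So it suffices to show that no involution of $\PGL_3(\bar{\KK})$ lying in the centralizer of the Galois action can occur, and the mechanism for this is that an involution of $\PP^2_{\bar{\KK}}$ has a very constrained fixed locus.

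The key step is the fixed-point analysis. An involution $g\in\PGL_3(\bar{\KK})$ is, up to conjugacy, $\mathrm{diag}(1,1,-1)$, so $\Fix_{\PP^2_{\bar{\KK}}}(g)$ is the disjoint union of a line and an isolated point. Both the line $L$ and the isolated point $Q$ are canonically attached to $g$ (they are the two components of the fixed locus, of different dimensions), hence any automorphism of $S_{\bar{\KK}}$ commuting with $g$ preserves each of them, and in particular the Galois action — which commutes with $g$ since $g$ comes from an automorphism of $S$ defined over $\KK$ — fixes $Q$ and preserves $L$. Therefore $Q$ is a $\KK$-point of $S$ and $L$ descends to a curve of degree $1$ on $S$. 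The existence of a $\KK$-point already contradicts the non-triviality of $S$ (a Severi--Brauer surface with a rational point is $\PP^2$), or one may instead invoke Lemma~\ref{lemma:SB-surface-curves}, since a line on $S_{\bar{\KK}}$ has degree $1$, not divisible by $3$. Either way we reach a contradiction, so $|G|$ is odd.

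I expect the main obstacle to be purely bookkeeping: making the identification of $\Aut(S)$ with the centralizer of the Galois action inside $\PGL_3(\bar{\KK})$ precise enough that ``$g$ defined over $\KK$'' really does force the Galois action to commute with $g$, and hence to preserve the two Galois-intrinsic pieces of $\Fix(g)$. Once that is set up, the conclusion is immediate from the single observation that a nontrivial involution of $\PP^2$ has a fixed point (equivalently, a fixed line of degree $1$), both of which are forbidden on a non-trivial Severi--Brauer surface by Theorem~\ref{theorem:SB-point-degree} (or by Lemma~\ref{lemma:SB-surface-curves}). There are no delicate estimates here; the entire content is the rigidity of the involution's fixed locus under the commuting Galois action.
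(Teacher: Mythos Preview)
Your argument is correct and is essentially the same as the paper's: take an involution $g\in G$, observe that its fixed locus on $S_{\bar{\KK}}\cong\PP^2_{\bar{\KK}}$ is a line together with a single isolated point, and use that the Galois group commutes with $g$ to conclude that this isolated point is defined over $\KK$, contradicting non-triviality of $S$. The paper states it more tersely and only invokes the $\KK$-point contradiction, whereas you also note the alternative via Lemma~\ref{lemma:SB-surface-curves}, but the content is identical.
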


\begin{proof}
Suppose that the order of $G$ is even. Then $G$ contains an element $g$ of order $2$.
Consider the action of $g$ on $S_{\bar{\KK}}\cong\PP^2_{\bar{\KK}}$.
The fixed point locus $\Fix_{S_{\bar{\KK}}}(g)$ is a union of a line and a unique isolated point $P$. Since the
Galois group $\Gal(\bar{\KK}/\KK)$ commutes with $g$, the point $P$ is $\Gal(\bar{\KK}/\KK)$-invariant,
which is impossible by assumption.
\end{proof}

\begin{corollary}
\label{corollary:SB-Aut}
Let $S$ be a non-trivial Severi--Brauer surface over a field of characteristic zero, and
let $G\subset\Aut(S)$ be a finite subgroup. Then
\begin{itemize}
\item[(i)] the group $G$ is either abelian, or contains a normal abelian subgroup
of index~$3$;

\item[(ii)] there exists a $G$-invariant point of degree $3$ on $S$.
\end{itemize}
\end{corollary}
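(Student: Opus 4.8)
The plan is to deduce both statements from the analogous facts about finite subgroups of $\PGL_3(\bar{\KK})$ collected in Lemma~\ref{lemma:P2}, the bridge being Lemma~\ref{lemma:SB-Aut}, which guarantees that $|G|$ is odd. Fix an isomorphism $S_{\bar{\KK}}\cong\PP^2_{\bar{\KK}}$; base change identifies $G$ with a finite subgroup of $\Aut(S_{\bar{\KK}})\cong\PGL_3(\bar{\KK})$ of odd order, and this identification is compatible with the natural commuting action of $\Gal(\bar{\KK}/\KK)$ on $S_{\bar{\KK}}$. For part~(i) there is then nothing more to do: Lemma~\ref{lemma:P2}(i) applies directly and says that $G$ is abelian or has a normal abelian subgroup of index~$3$.

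For part~(ii) I would first dispose of the case $G=\{1\}$ by recalling that a non-trivial Severi--Brauer surface always carries a point of degree~$3$ (it corresponds to a division algebra of degree~$3$, and any maximal subfield of such an algebra is a cubic splitting field; see e.g.\ \cite{Kollar-SB}). So assume $G$ is non-trivial, hence of odd order by Lemma~\ref{lemma:SB-Aut}, and apply Lemma~\ref{lemma:P2}(ii): it produces a subgroup $H\subseteq G$, with either $H=G$ or $H$ normal of index~$3$ in $G$, such that on $S_{\bar{\KK}}$ either $\Fix_{S_{\bar{\KK}}}(H)$ consists of exactly three points, or $\Fix_{S_{\bar{\KK}}}(H)$ has a unique isolated point. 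Since $\Gal(\bar{\KK}/\KK)$ commutes with every element of $H$ and $G$ normalises $H$, both groups preserve the closed subset $\Fix_{S_{\bar{\KK}}}(H)$, and each of them carries isolated fixed points to isolated fixed points.

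In the first alternative the unique isolated point of $\Fix_{S_{\bar{\KK}}}(H)$ is then fixed by $\Gal(\bar{\KK}/\KK)$ and by $G$; being $\Gal(\bar{\KK}/\KK)$-fixed it is a $\KK$-point of $S$, which is impossible because $S$ is non-trivial. Hence $\Fix_{S_{\bar{\KK}}}(H)$ is a set $\{Q_1,Q_2,Q_3\}$ of three points, invariant under both $\Gal(\bar{\KK}/\KK)$ and $G$. As a $\Gal(\bar{\KK}/\KK)$-invariant reduced zero-dimensional subscheme of length~$3$ it is a disjoint union of closed points of $S$ whose degrees add up to~$3$; since $S$ has no point of degree~$1$ or~$2$ by Theorem~\ref{theorem:SB-point-degree}, it must be a single closed point $P$ of degree~$3$. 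Finally $G$, permuting $Q_1,Q_2,Q_3$, fixes $P$, so $P$ is the desired $G$-invariant point of degree~$3$, which proves~(ii).

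The computational content here is minimal; the part that needs care is the bookkeeping with the two commuting group actions. The crucial use of \emph{uniqueness} of the isolated fixed point is what lets me conclude that it is simultaneously $\Gal(\bar{\KK}/\KK)$- and $G$-invariant, and thereby exclude the first alternative of Lemma~\ref{lemma:P2}(ii); and a second appeal to Theorem~\ref{theorem:SB-point-degree} is what forces the surviving $\Gal(\bar{\KK}/\KK)$-orbit of size~$3$ to be irreducible, i.e.\ an honest point of degree~$3$ rather than a union of points of smaller degree. I do not anticipate any real obstacle beyond these two observations.
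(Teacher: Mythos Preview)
Your proof is correct and follows essentially the same route as the paper's own argument, invoking Lemma~\ref{lemma:SB-Aut} to get odd order and then Lemma~\ref{lemma:P2}(i),(ii); you even make explicit the step (left implicit in the paper) that the three Galois-invariant geometric points must assemble into a single closed point of degree~$3$ via Theorem~\ref{theorem:SB-point-degree}. One small slip of wording: where you write ``In the first alternative the unique isolated point\ldots'' you evidently mean the alternative with a unique isolated point, which in your listing was the second one.
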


\begin{proof}
By Lemma~\ref{lemma:SB-Aut}, the order of $G$ is odd.
The action of $G$ on $S_{\bar{\KK}}\cong\PP^2_{\bar{\KK}}$ gives
an embedding $G\subset\PGL_3(\bar{\KK})$.
By Lemma~\ref{lemma:P2}(i)
the group $G$ is either abelian,  or contains a normal abelian subgroup
of index~$3$. This proves assertion~(i).

To prove assertion~(ii), we may assume that $G$ is non-trivial.
By Lemma~\ref{lemma:P2}(ii), the group $G$ contains a subgroup $H$ such that either $|\Fix_{S_{\bar{\KK}}}(H)|=3$,
or $\Fix_{S_{\bar{\KK}}}(H)$ has a unique isolated point; moreover, one can choose $H$ with such a property so that either~$H$ coincides with~$G$,
or $H$ is a normal subgroup of index $3$ in~$G$.
In any case, $\Fix_{S_{\bar{\KK}}}(H)$ cannot have a unique isolated point, because this
point would be $\Gal(\bar{\KK}/\KK)$-invariant,
which is impossible by assumption.
Hence $|\Fix_{S_{\bar{\KK}}}(H)|=3$. Since $H$ is a normal subgroup in $G$, the set $\Fix_{S_{\bar{\KK}}}(H)$
is $G$-invariant. This gives a $G$-invariant point of degree $3$ on~$S$ and proves assertion~(ii).
\end{proof}

\begin{remark}
It is interesting to note that the analogs of Lemma~\ref{lemma:SB-Aut} and Corollary~\ref{corollary:SB-Aut}
do not hold for Severi--Brauer curves, i.e. for conics. Thus, a conic over the field
$\mathbb{R}$ of real numbers defined by the equation
$$
x^2+y^2+z^2=0
$$
in $\PP^2$ with homogeneous coordinates $x$, $y$, and $z$ has no $\mathbb{R}$-points. However, it
is acted on by all finite groups embeddable into $\PGL_2(\mathbb{C})$,
that is, by cyclic groups, dihedral groups, the tetrahedral group~$\mathfrak{A}_4$, the octahedral group~$\mathfrak{S}_4$, and the icosahedral group~$\mathfrak{A}_5$.
From this point of view finite groups acting on non-trivial Severi--Brauer curves
are \emph{more} complicated than those acting on non-trivial Severi--Brauer surfaces.
It would be interesting to obtain a complete classification of finite groups acting on Severi--Brauer surfaces
similarly to what is done for conics in~\cite{Garcia-Armas}.
\end{remark}

Recall from~\S\ref{section:birational} that a  Severi--Brauer
surface $S$ is birational to the surface~$S^{op}$. In particular,
the groups $\Bir(S)$ and $\Bir(S^{op})$ are (non-canonically) isomorphic,
and the group $\Aut(S^{op})$ is (non-canonically) realized as a subgroup
of~\mbox{$\Bir(S)$}. Note also that~\mbox{$\Aut(S^{op})\cong \Aut(S)$},
although these groups are not conjugate in~\mbox{$\Bir(S)$}.

Corollary~\ref{corollary:SB-Aut} has the following geometric consequence.

\begin{corollary}\label{corollary:conjugate}
Let $S$ be a non-trivial Severi--Brauer surface over a field of characteristic zero, and
let $G\subset\Aut(S^{op})$ be a finite subgroup. Then $G$ is conjugate to a subgroup of~$\Aut(S)$.
\end{corollary}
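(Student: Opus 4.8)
The plan is to exploit Corollary~\ref{corollary:SB-Aut}(ii) together with the explicit description of the birational map $\tau_P$ from~\S\ref{section:birational}. Concretely, the statement we must prove is that a finite subgroup $G\subset\Aut(S^{\op})$ is conjugate (inside $\Bir(S)$, via the fixed birational identification $\Bir(S^{\op})\cong\Bir(S)$) to a subgroup of $\Aut(S)$. First I would apply Corollary~\ref{corollary:SB-Aut}(ii) to the Severi--Brauer surface $S^{\op}$ and the finite group $G\subset\Aut(S^{\op})$: this produces a $G$-invariant point $P$ of degree~$3$ on $S^{\op}$. By Lemma~\ref{lemma:SB-general-position} the point $P$ is in general position, so the construction of~\S\ref{section:birational} yields a birational map $\tau_P\colon S^{\op}\dasharrow S''$, where $S''$ is a non-trivial Severi--Brauer surface; and by the degree computation recalled there (via \cite[Exercise~3.3.7(iii)]{GS}) one has $S''\cong (S^{\op})^{\op}\cong S$.

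The key point is $G$-equivariance. Since the point $P$ is $G$-invariant, the blow-up of $S^{\op}$ at $P$ carries an induced $G$-action, and the exceptional locus (three points of $\PP^2_{\bar\KK}$ together with the proper transforms of the three lines through pairs of them) is permuted by $G$; hence the contraction of those three $(-1)$-curves that defines $\tau_P$ is also $G$-equivariant. Therefore $\tau_P$ conjugates the action of $G$ on $S^{\op}$ into an action of $G$ by automorphisms of $S''\cong S$, i.e. $\tau_P G \tau_P^{-1}\subset\Aut(S)$. Composing with the chosen identification $\Bir(S^{\op})\cong\Bir(S)$ gives the required conjugacy inside $\Bir(S)$.

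The main obstacle — really the only thing requiring care — is verifying that $\tau_P$ genuinely intertwines the $G$-action, i.e. that after transporting $G$ through $\tau_P$ one obtains a \emph{biregular} action on $S''$ rather than merely a birational one. This follows because $\tau_P$ is, by construction, a composition of a blow-up at the $G$-invariant center $P$ and a blow-down of a $G$-invariant collection of curves, both of which are canonically $G$-equivariant operations; the induced birational self-maps $\tau_P\circ g\circ\tau_P^{-1}$ of $S''$ are thus regular in codimension one and extend to automorphisms of the smooth projective surface $S''$. One should also note that $\Aut(S^{\op})\cong\Aut(S)$ abstractly, so the resulting subgroup of $\Aut(S)$ is indeed isomorphic to $G$; but since we only claim conjugacy in $\Bir(S)$ this is automatic. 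No choice in the argument depends on $\KK$ being anything beyond a perfect field of characteristic zero, which is the standing hypothesis.
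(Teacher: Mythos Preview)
Your proof is correct and follows essentially the same route as the paper: invoke Corollary~\ref{corollary:SB-Aut}(ii) to find a $G$-invariant degree-$3$ point $P$ on $S^{\op}$, note via Lemma~\ref{lemma:SB-general-position} that $P$ is in general position, and then use the birational map $\tau_P\colon S^{\op}\dasharrow S$ from~\S\ref{section:birational}, observing that both the blow-up of $P$ and the blow-down of the three lines are $G$-equivariant. The paper's proof is terser, but your added discussion of why the transported action on $S$ is biregular is a reasonable elaboration of the same argument.
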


\begin{proof}
By Corollary~\ref{corollary:SB-Aut}(ii) the group $G$ has
an invariant point $P$ of degree $3$ on $S'$, and by Lemma~\ref{lemma:SB-general-position} this point is in general position.
Blowing up~$P$ and blowing down the proper transforms of the three lines on $S'_{\bar{\KK}}\cong\PP^2_{\bar{\KK}}$
passing through the pairs of the three points of $P_{\bar{\KK}}$,
we obtain a (regular) action of $G$ on the surface~$S$ together with a $G$-equivariant
birational map $\tau_P\colon S'\dasharrow S$, cf.~\S\ref{section:birational}.
This means that $G$ is conjugate to a subgroup of~$\Aut(S)$.
\end{proof}

Now we prove Proposition~\ref{proposition:SB-Bir-vs-Aut}.

\begin{proof}[Proof of Proposition~\ref{proposition:SB-Bir-vs-Aut}]
We know from Proposition~\ref{proposition:summary} that $G$ is conjugate to a subgroup of
$\Aut(S')$, where $S'\cong S$ or $S'\cong S^{op}$. In the former case we are done.
In the latter case~$G$ is conjugate to a subgroup of $\Aut(S)$ by Corollary~\ref{corollary:conjugate}.
\end{proof}

Similarly to Lemma~\ref{lemma:SB-Aut}, we prove the following.

\begin{lemma}
\label{lemma:27}
Let $\KK$ be a field of characteristic zero that contains all roots of $1$.
Let $S$ be a non-trivial Severi--Brauer surface over $\KK$, and
let $G\subset\Aut(S)$ be a finite subgroup.
Then $G$ is isomorphic to a subgroup of~$\mumu_3^2$.
\end{lemma}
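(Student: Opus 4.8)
The plan is to mimic the proof of Lemma~\ref{lemma:SB-Aut} and Corollary~\ref{corollary:SB-Aut}, but now exploiting the stronger hypothesis that $\KK$ contains all roots of $1$ to rule out \emph{any} $\Gal(\bar\KK/\KK)$-invariant point on $S_{\bar\KK}$ whose existence would force $S$ to be trivial. First I would recall that the action of $G$ on $S_{\bar\KK}\cong\PP^2_{\bar\KK}$ gives an embedding $G\subset\PGL_3(\bar\KK)$, and that by Lemma~\ref{lemma:SB-Aut} the order of $G$ is odd. The goal is to show $G$ is abelian and isomorphic to a subgroup of $\mumu_3^2$; equivalently, that the preimage $\tilde G\subset\SL_3(\bar\KK)$ is, after adjusting by the central $\mumu_3$, a group of exponent dividing $3$ acting diagonalizably.

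The key step is to eliminate the non-abelian case. Suppose $G$ is non-abelian. By Lemma~\ref{lemma:P2}(i) (valid since $|G|$ is odd) the group $G$ contains a normal abelian subgroup $H$ of index $3$; moreover, from the proof of Lemma~\ref{lemma:P2}, $V$ splits as a sum of three one-dimensional $\tilde H$-representations cyclically permuted by $\tilde G/\tilde H\cong\mumu_3$. But then $G$ contains a subgroup isomorphic to $\HHH_3$ or (after quotienting suitably) one can apply Lemma~\ref{lemma:P2}(iii): there is an element $g\in G$ with $\Fix_{\PP^2}(g)$ consisting of a line and a unique isolated point. Since $\Gal(\bar\KK/\KK)$ commutes with $g$, that isolated point is $\Gal(\bar\KK/\KK)$-invariant, hence a $\KK$-point of $S$, contradicting non-triviality of $S$. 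So $G$ is abelian.

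It remains to show the abelian group $G$ has exponent dividing $3$. Here is where the hypothesis on roots of $1$ is essential: since $G\subset\PGL_3(\bar\KK)$ is abelian, after diagonalizing we get $G\subset(\bar\KK^*)^2$ (the diagonal torus modulo scalars); if $G$ had an element of order $n>3$ with $3\nmid n$ we could already derive a contradiction as above by finding an eigenvector fixed point, but the delicate case is an element of order $9$ (or higher $3$-power). For such an element $g$ of order $9$, the three eigenvalue characters on $V$ are powers of a primitive $9$th root of unity; I would argue that the corresponding three fixed points on $\PP^2_{\bar\KK}$, which form a $G$-invariant (indeed $\Gal$-invariant, since roots of unity are in $\KK$, so the eigenspace decomposition is defined over $\KK$) configuration, must split into Galois orbits forced by Lemma~\ref{lemma:SB-surface-curves} and Theorem~\ref{theorem:SB-point-degree} in a way incompatible with a $\mumu_9$-action: concretely, an element of $\PGL_3$ of order $9$ cannot preserve the non-trivial Brauer class, because the associated "Amitsur subgroup"/eigenvalue obstruction — the cyclic algebra built from $g$ and the Galois action — would be split once all $9$th roots of $1$ are available. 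The main obstacle is making this last point rigorous: one must show that the presence of an order-$9$ diagonal automorphism, together with $\zeta_9\in\KK$, is genuinely incompatible with $S$ being non-trivial. I expect this to follow from the description of $\Aut(S)$ in \cite[Lemma~4.1]{ShramovVologodsky} together with \cite[Exercise~3.3.4, 3.3.5]{GS}: the automorphism group of a non-trivial Severi--Brauer surface over a field containing $\mumu_3$ is, after the analysis in Theorem~\ref{theorem:ShramovVologodsky}(i), a $3$-group of exponent $3$, so no element of order $9$ can occur, giving $G\hookrightarrow\mumu_3^2$.
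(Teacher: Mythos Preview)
Your argument has a genuine gap in the non-abelian step, and the abelian step is, as you acknowledge, not rigorous. Both issues disappear once you invoke the right input at the \emph{start} rather than the end.

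The paper's proof begins with Theorem~\ref{theorem:ShramovVologodsky}(i): under the hypothesis that $\KK$ contains all roots of~$1$, every non-trivial element of $G$ already has order~$3$ and $|G|\le 27$. This single sentence replaces your entire abelian-case discussion (no order-$9$ elements to worry about) and also forces any non-abelian $G$ to be exactly~$\HHH_3$. Then one only needs Lemma~\ref{lemma:P2}(iv) to exclude $\mumu_3^3$ and Lemma~\ref{lemma:P2}(iii) plus the fixed-point trick to exclude~$\HHH_3$. You do eventually invoke Theorem~\ref{theorem:ShramovVologodsky}(i) in your last sentence, but by that point the detour has introduced an error.

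The specific gap: in your treatment of the non-abelian case you assert that $G$ ``contains a subgroup isomorphic to $\HHH_3$'' or, failing that, an element whose fixed locus is a line plus an isolated point. Neither claim holds for a general odd-order non-abelian subgroup of $\PGL_3(\bar\KK)$. Take $G=\mumu_7\rtimes\mumu_3$, embedded via $\zeta\mapsto\operatorname{diag}(\zeta,\zeta^2,\zeta^4)$ together with the cyclic coordinate permutation. Its $3$-Sylow is $\mumu_3$, so there is no copy of $\HHH_3$; and one checks that \emph{every} non-identity element has three distinct eigenvalues in $\PGL_3$, hence three isolated fixed points and no fixed line. So Lemma~\ref{lemma:P2}(iii) does not apply, and your contradiction fails. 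Of course this particular $G$ cannot act on a non-trivial $S$ when $\KK$ contains all roots of~$1$---but the reason is precisely Theorem~\ref{theorem:ShramovVologodsky}(i), which you have not yet used at that stage of your argument.

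In short: move your final appeal to Theorem~\ref{theorem:ShramovVologodsky}(i) to the first line, and the proof collapses to the paper's three-line version.
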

\begin{proof}
We know from Theorem~\ref{theorem:ShramovVologodsky}(i)
that every non-trivial element of $G$ has order $3$, and~\mbox{$|G|\le 27$}.
Assume that $G$ is not isomorphic to a subgroup of~$\mumu_3^2$.
Then either $G\cong\mumu_3^3$ or~\mbox{$G\cong\HHH_3$}.
The former case is impossible by Lemma~\ref{lemma:P2}(iv).
Thus, we have $G\cong\HHH_3$.
By Lemma~\ref{lemma:P2}(iii)
the group $G$ contains an element $g$ such that $\Fix_{S_{\bar{\KK}}}(g)$ has a unique isolated point. This latter point
must be $\Gal(\bar{\KK}/\KK)$-invariant, which is impossible by assumption.
\end{proof}

Finally, we prove our main result.

\begin{proof}[Proof of Theorem~\ref{theorem:main}]
Assertion (i) follows from Proposition~\ref{proposition:summary}
and Lemma~\ref{lemma:SB-Aut}.
Assertion~(ii) follows from Proposition~\ref{proposition:SB-Bir-vs-Aut} and
Corollary~\ref{corollary:SB-Aut}(i).
Assertion (iii) follows
from Proposition~\ref{proposition:summary}
and Lemma~\ref{lemma:27}.
\end{proof}

I do not know if the bound provided by Theorem~\ref{theorem:main}(iii)
(or Lemma~\ref{lemma:27}) is optimal. However, in certain cases it is easy to construct
non-trivial Severi--Brauer surfaces with an action of the group~\mbox{$\mumu_3^2$}.

\begin{example}\label{example:cyclic-algebra}
Let $\KK$ be a field of characteristic different from $3$ that contains a primitive
cubic root of unity $\omega$. Let $a, b\in \KK$ be the elements such that $b$ is not
a cube in $\KK$, and~$a$ is not contained in the image of the Galois norm for the
field extension~\mbox{$\KK\subset\KK(\sqrt[3]{b})$}.
Consider the algebra~$A$ over~$\KK$ generated by variables $u$ and $v$
subject to relations
$$
u^3=a,\quad v^3=b,\quad uv=\omega vu.
$$
Then $A$ is a central division algebra, see for instance~\mbox{\cite[Exercise~3.1.6(ii),(iv)]{GS}}.
One has $\dim A=9$, so that $A$ corresponds to a non-trivial Severi--Brauer
surface $S$. Conjugation by $u$ defines an automorphism of order $3$ of $A$
(sending $u$ to $u$ and $v$ to~$\omega v$). Together with conjugation by
$v$ it generates a group $\mumu_3^2$ acting by automorphisms of $A$ and $S$.
\end{example}


\end{document}